\documentclass[11pt]{article}

\usepackage[margin=1in]{geometry}
\usepackage{amsmath}
\usepackage{amsfonts}
\usepackage{amssymb}
\usepackage{amsthm}
\usepackage{mathtools}
\usepackage{graphicx}
\usepackage{xcolor}
\usepackage{enumerate}
\usepackage{tikz}\usetikzlibrary{decorations.pathreplacing}
\graphicspath{ {./figures/} }

\newtheorem{lemma}{Lemma}
\newtheorem{corollary}{Corollary}
\newtheorem{prop}{Proposition}
\newtheorem{theorem}{Theorem}
\theoremstyle{definition}
\newtheorem{definition}{Definition}
\usepackage{biblatex}

\renewcommand{\epsilon}{\varepsilon}
\newcommand{\abs}[1]{\left\lvert#1\right\rvert}
\newcommand{\norm}[1]{\left\lVert#1\right\rVert}

\newcommand{\Z}{\mathbb{Z}}
\newcommand{\R}{\mathbb{R}}
\newcommand{\gabor}{\mathcal{G}}
\newcommand{\defeq}{\vcentcolon=}

\title{The frameset of the second-order B-spline along $ab=1/q$}
\author{Alexander Stangl and Christina Frederick \\[4pt]
\normalsize Department of Mathematical Sciences,\\ New Jersey Institute of Technology, Newark, NJ, USA \\
\normalsize \texttt{ajs282@njit.edu} \quad \texttt{christin@njit.edu}}

\begin{document}
\maketitle
\begin{abstract}
We give a complete characterization of the zero set of the Zak transform of the second-order B-spline $Q_2$. As a consequence, we obtain a new proof of the Lemvig--Nielsen obstructions to the Gabor frame property of $Q_2$, and, along the hyperbola $ab=1/q$, we show these obstructions are sharp: every point they do not exclude belongs to the frame set. This gives the first complete characterization of $\mathcal F(Q_2)$ along an entire curve in this family, resolving a question left open by Lemvig and Nielsen in \cite{Lemniel}.
\end{abstract}
\section{Introduction}
The \textit{Gabor system} generated by a non-zero window function
\(g \in L^{2}(\R)\) along the lattice $a\mathbb{Z}\times b\mathbb{Z}$, $a,b>0$, defined by
\begin{equation}\label{eq:Gabor_glambda}
	\mathcal{G}(g, a,b) = \{g(\cdot-ak) 
		e^{2\pi imb\cdot}:\, m,k \in \mathbb{Z}\},
\end{equation} is said to be a {\it frame} provided that there exist constants $0<A\leq B< \infty$  such that for each \(f \in
L^2(\R)\)
\begin{equation}
	A\norm{f}^2 \leq \sum_{m,k \in \mathbb{Z}}
		\abs{\langle{f, g(\cdot-ak) 
		e^{2\pi imb\cdot}\rangle}}^2 \leq B\norm{f}^2.
\end{equation}

Here we consider Gabor systems generated by B-splines of order $n=2$ or higher, which are  even, compactly supported functions defined
recursively as
\begin{equation}\label{eq:sn}
	Q_1(x) = \chi_{[-1/2, 1/2]}(x), \qquad Q_n(x) = Q_1(x) * Q_{n-1}(x).
\end{equation}

For $n\geq 2$, it is an open problem to fully characterize the 
\textit{frame set} 
\(\mathcal{F}(Q_n)\),
\begin{equation}
	\mathcal{F}(Q_n) = \left\{\;(a,b) \in \R^2 : \mathcal{G}(Q_n, a,b)
		\text{ is a frame over } L^2(\R)\;\right\}.
\end{equation}

The frame set of $Q_n$ for $n\geq 2$ was posed as an open problem by Christensen \cite{christensen2014six}, and remains unresolved outside a small number of special cases. See
\cite{AtiKouOko1, atindehou2023frame, Olehon, Olehon1, Ole4, ghosh2025gabor, ghosh2023obstructions, Grojan, Lemniel, Kloosto} for recent progress on finding new regions of $\mathcal{F}(Q_n)$. As noted above,
 \(\mathcal{F}(Q_2)\) is an open set in $\{(a, b) \in \R^2_+: \, 0<ab<1\}$ \cite{FeiKai04},
but a full characterization of this set remains an open question.

 In \cite{Lemniel}, the authors provided the first class of counterexamples to the frame set conjecture for the B-spline that rely on nontrivial geometric constraints. For $x\in \mathbb{R}$ we let $R(x) = \lfloor x+\frac{1}{2} \rfloor$ and $F(x) = x-R(x)\in [-\frac{1}{2}, \frac{1}{2})$.

\begin{theorem}[Lemvig--Nielsen obstructions, Theorem 7 in \cite{Lemniel}]\label{thm:lemneil} Let $n\in \mathbb{N}$, let $a>0$, $b>3/2$, and $p,q\in \mathbb{N}$ where $\gcd(p,q)=1$.  If \(ab = \frac{p}{q}<1\), \(\abs{F(b)}
    \leq \frac{1}{nq}\),
    then \(\gabor(Q_n,a,b)\) is not a frame for \(L^2(\R).\)
    
\end{theorem}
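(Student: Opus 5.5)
The plan is to use the Zak transform / Zibulski--Zeevi characterization for rational density. Write $\alpha=1/b$ and
\[
Z g(x,\omega)=\sum_{k\in\Z} g(x-\alpha k)\,e^{2\pi i k\omega}.
\]
For $ab=p/q$ with $\gcd(p,q)=1$, $\gabor(g,a,b)$ is a frame if and only if the $p\times q$ Zibulski--Zeevi matrix $\Phi(x,\omega)$ has smallest singular value bounded below for a.e.\ $(x,\omega)$. Each entry of $\Phi$ is a value of $Zg$ at a point of the form $(x-\ell a,\ \omega+j/p)$, up to unimodular factors. For $g=Q_n$ the Zak transform is continuous, since $Q_n$ is continuous and compactly supported for $n\ge 2$; the case $n=1$ needs a separate one-sided limit argument. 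Continuity reduces the task to a pointwise one: find a single point $(x_0,\omega_0)$ where $\Phi(x_0,\omega_0)$ has rank $<p$. Continuity then gives a neighbourhood of positive measure on which the smallest singular value is arbitrarily small, so the lower frame bound fails. The concrete target is a point where an entire row (or an entire column in the $q$-direction) of $\Phi$ vanishes. That amounts to $q$ points of the form $(x_0+\ell/(qb)\bmod \alpha,\ \omega_0)$ at which $ZQ_n$ vanishes simultaneously.

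To find such zeros I would use the dual (Poisson) representation
\[
ZQ_n(x,\omega)=b\sum_{m\in\Z}\widehat{Q_n}\bigl(b(m-\omega)\bigr)e^{2\pi i x b(m-\omega)},\qquad \widehat{Q_n}(\xi)=\Bigl(\tfrac{\sin\pi\xi}{\pi\xi}\Bigr)^n.
\]
Since $Q_n$ is even, on the symmetry lines (e.g.\ $x=\alpha/2$, or $\omega=1/2$) the quantity $e^{-i\theta}ZQ_n$ is real for a suitable phase $\theta$, so zeros can be located by sign changes. Write $b=N+F(b)$ with $N=R(b)\ge 2$ (using $b>3/2$). Then $\widehat{Q_n}(b(m-\omega))$ is governed by $\sin^n(\pi b\omega-\pi mF(b))$. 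When $|F(b)|$ is small, all terms $m\neq 0$ are uniformly small compared with the main term near the points where $\sin(\pi b\omega)$ changes sign. I would then restrict to a one-parameter family of points along a symmetry line and show that the real-valued function changes sign on each of $q$ consecutive subintervals of length $\approx 1/(qb)$. The intermediate value theorem then produces the required zeros.

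The crucial quantitative step, which I expect to be the main obstacle, is to ensure that these sign changes occur at positions aligned with the $q$ shifts $\ell/(qb)$ appearing in a single row of $\Phi$. This is where $|F(b)|\le 1/(nq)$ must be used. The phase drift $\pi m F(b)$ accumulated across the $q$ shifts is at most $\pi/n$ in each factor. After raising to the $n$-th power, the total sign pattern of the dominant terms should remain unchanged across all $q$ shifted evaluations. My first attempt would be to pair the $m$ and $-m$ (or $m$ and $1-m$) terms using evenness. With that pairing the sum reduces to a real trigonometric series whose sign at the two endpoints of each interval is determined by the leading term. The tail would then be bounded by $\sum_{m\ge 2}(bm)^{-n}$, using $b>3/2$ to make it smaller than the leading term. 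If the sign control fails at the boundary case $|F(b)|=1/(nq)$, I would recover that case by a closedness/limiting argument. The non-frame property at boundary points follows from continuity of the singular values in $(x,\omega)$ together with exact vanishing, rather than from strict inequalities.
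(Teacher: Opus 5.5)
Your reduction matches the paper's: by Theorem~\ref{thm:ZZchar} and continuity, it suffices to find one frequency $\gamma_0$ and one $x_0$ with $Z_{1/b}Q_n(x_0-s/q,\gamma_0)=0$ for all $s=0,\dots,q-1$, i.e.\ a zero row of $\Phi$. (This is in the paper's normalization, where your shifts $\ell/(qb)$ mod $\alpha$ become $s/q$ mod $1$.)

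\textbf{The gap.} The gap is the step that is supposed to produce these $q$ zeros. An intermediate-value argument along a one-parameter family only gives zeros at uncontrolled positions. Requiring $q$ of them to land exactly on an arithmetic progression of step $1/q$, at a common $\gamma_0$, is an overdetermined system: $q$ equations in the two real unknowns $(x_0,\gamma_0)$. An unchanged sign pattern across the shifts gives no exact coincidence.

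This is not a technicality. For $n=2$, Theorem~\ref{thm:zerosZak} shows that apart from zeros at $\gamma=k/N$, $N=R(b)$, every zero is the single point $x=\tfrac12$ on its $\gamma$-line. For example, on your symmetry line $\omega=\tfrac12$ (the paper's $\gamma=\tfrac12$) with $N$ odd, $x=\tfrac12$ is the only zero, so no row can vanish there.

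\textbf{The estimates also fail.} Write $\gamma$ for your frequency variable. At $\gamma=k/N$ one has $b(m-\gamma)=(Nm-k)+F(b)(m-k/N)$, so $\bigl|\widehat{Q_n}(b(m-\gamma))\bigr|=\bigl|\sin(\pi F(b)(m-k/N))\bigr|^n/(\pi b\,|m-k/N|)^n\approx(|F(b)|/b)^n$ for all $|m|\ll 1/|F(b)|$. So there is no dominant term. A tail bound of size $\sum_{m\ge2}(bm)^{-n}\sim b^{-n}$ is much larger than the $m=0$ term when $F(b)$ is small. Moreover, for even $n$ (including $n=2$) $\widehat{Q_n}\ge0$, so the main term never changes sign.

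\textbf{The missing idea.} The zeros behind the obstruction are not isolated. For $k\not\equiv0\pmod N$, $Z_{1/b}Q_2(\cdot,k/N)$ vanishes identically on the whole interval $[|\epsilon|,1-|\epsilon|]$ (Lemma~\ref{lem:zerosZak1}). This is exact algebra, not an estimate. For such $x$ the sum in Lemma~\ref{lem:zakexpr} runs over exactly $k=-N,\dots,N-1$. Since $\omega^N=1$, the terms $k=j-N$ and $k=j$ combine to $\bigl(1+\tfrac{x+j-N}{b}\bigr)+\bigl(1-\tfrac{x+j}{b}\bigr)=2-\tfrac Nb$, leaving $(2-\tfrac Nb)\sum_{j=0}^{N-1}\omega^j=0$.

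The hypothesis $|F(b)|\le\frac1{2q}$ is then exactly the condition that the $q$ points $x_0-s/q$, with $x_0=1-|\epsilon|$, all lie in this closed interval. Hence the row vanishes at $(x_0,k/N)$. The boundary case $|F(b)|=\frac1{2q}$ is covered directly, with no limiting argument.

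For general $n$, grouping $\sum_k Q_n(\tfrac{x+k}{b})\omega^k$ by residues mod $N$ makes all residue sums equal on an arc of length $1-n|F(b)|$, so the Zak transform vanishes there. That is where the threshold $\frac1{nq}$ comes from; the paper carries this out only for $n=2$.
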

In the last decade, there has been progress in finding counterexamples, e.g., $(a,b)\not \in \mathcal{F}(Q_2)$ \cite{grochenig2015partitions, lemvig2026new, ghosh2025obstructions}. These analyses are based on the Zak Transform and the Zibulski-Zeevi representation of Gabor systems \cite{zibulski1997analysis}. In particular, the authors show that the Zibulski-Zeevi matrix is nonsingular, e.g., by showing zero row failure or exhibiting linearly independent vectors.  Further obstructions have been found using a partition of unity property. In \cite{atindehou2023frame}, the authors use a different approach involving linear systems generated by the dual frame relationship.  In \cite{ghosh2025gabor}, the authors broadened the scope to totally positive functions and Hermite functions generally.

Each of these results, however, identifies points or subsets of curves \emph{excluded} from the frame set; whether the remaining points on any such curve actually lie \emph{in} the frame set has remained open. Here we resolve this question along the full hyperbola $ab=1/q$, $b\geq 3/2$: using the same Zak transform and Zibulski--Zeevi framework, together with a complete characterization of the zero set of $Z_{1/b}Q_2$, we establish both directions, giving the first full characterization of $\mathcal F(Q_2)$ along an entire curve in this family.

\subsection{Contributions and organization}

Our main technical contribution is a complete characterization of the zero set of the Zak transform $Z_{1/b}Q_2$ (hitherto characterized in part in \cite{janssen1988, Lemniel}): for every $(x,\gamma)$, we determine exactly when $Z_{1/b}Q_2(x,\gamma)=0$, in terms of two explicit families,  whole-interval zeros at $\gamma=k/R(b)$, and a single isolated zero at $x=\tfrac12$ governed by a transcendental equation. This builds on the result of \cite{Lemniel} (Proposition~6), which identifies zeros only under the hypothesis $|F(b)|\le\tfrac{1}{nq}$ and does not address the complementary regime or the isolated zero family at $x=\tfrac12$.

\begin{theorem}\label{thm:zerosZak}
Let $b\geq\frac{3}{2}$, $N=R(b)$, and $\epsilon=F(b)$. For $(x,\gamma)\in[0,1)^2$, $(Z_{1/b}Q_2)(x,\gamma)=0$ only if $x\in[|\epsilon|,1-|\epsilon|]$, and moreover only if either
\begin{enumerate}[(i)]
    \item $\gamma=k/N$ for some $k\not\equiv0\pmod N$, in which case 
    \begin{align}
        (Z_{1/b}Q_2)(x,k/N)=0 \text{ for every } x\in[|\epsilon|,1-|\epsilon|]; \label{eq:zakrootsieps}
    \end{align}
 or
    \item $\gamma$ satisfies
    \begin{align}
        \tfrac12\cot(\pi\gamma) + (b-N)\cot(N\pi\gamma) = 0, \label{eq:coteqn}
    \end{align}
    in which case $x=\tfrac12$ is the unique zero.
\end{enumerate}
\end{theorem}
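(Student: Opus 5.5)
The plan is to compute $(Z_{1/b}Q_2)(x,\gamma)$ in closed form and read off its zeros. With the normalization $(Z_{1/b}Q_2)(x,\gamma)=\sum_{k\in\Z}Q_2\bigl((x+k)/b\bigr)e^{-2\pi i k\gamma}$ (any other normalization changes only a unimodular factor and a reflection, which do not affect the zero set), and $Q_2(t)=(1-|t|)_+$, the transform becomes
\begin{equation*}
(Z_{1/b}Q_2)(x,\gamma)=\frac1b\sum_{k:\,|x+k|<b}\bigl(b-|x+k|\bigr)e^{-2\pi i k\gamma}.
\end{equation*}
This is a finite trigonometric polynomial with trapezoidal (piecewise linear) coefficients. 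Write $b=N+\epsilon$ and split $k$ into $k\ge0$ and $k\le -1$. The set of active indices depends on whether $x<|\epsilon|$, $x\in[|\epsilon|,1-|\epsilon|]$, or $x>1-|\epsilon|$, so I treat the sign of $\epsilon$ and these three $x$-regimes separately. By the evenness of $Q_2$ there is a symmetry $x\mapsto 1-x$, $\gamma\mapsto-\gamma$ (up to a phase), so it suffices to handle $x\le\tfrac12$.

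On the middle range $x\in[|\epsilon|,1-|\epsilon|]$, the index set is exactly $k=-N,\dots,N-1$. I will sum the two arithmetic–geometric series explicitly, writing $z=e^{-2\pi i\gamma}$. Multiplying by the symmetric phase $e^{\pi i(2N-1)\gamma}$ should reduce the expression to the form
\begin{equation*}
A(\gamma)+(x-\tfrac12)\,i\,B(\gamma),
\end{equation*}
with $A$ and $B$ real. Summation by parts should give:
\begin{itemize}
\item[] $B(\gamma)$ proportional to the Dirichlet-type sum $\sum_{k=-N}^{N-1}e^{-2\pi i (k+1/2)\gamma}\cdot\mathrm{sign}$, which reduces to a multiple of $\sin(N\pi\gamma)^2/\sin(\pi\gamma)$;
\item[] $A(\gamma)$ a combination of $\tfrac12\cot(\pi\gamma)$ and $(b-N)\cot(N\pi\gamma)$, times $\sin(N\pi\gamma)\cdot(\text{nonzero})$.
\end{itemize}
Concretely, I aim for the factorization
\begin{equation*}
A(\gamma)=c\,\sin(N\pi\gamma)\Bigl(\tfrac12\cot(\pi\gamma)+(b-N)\cot(N\pi\gamma)\Bigr)
\end{equation*}
for some constant $c\neq0$. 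Vanishing of the transform then requires $A=0$ and $(x-\tfrac12)B=0$. There are two ways this can happen:
\begin{enumerate}[(i)]
\item If $\sin(N\pi\gamma)=0$ with $\gamma\notin\Z$, i.e. $\gamma=k/N$ with $k\not\equiv0\pmod N$, then both $A$ and $B$ vanish, which gives (i) for every $x$ in the interval.
\item Otherwise $B\ne0$. This forces $x=\tfrac12$, and $A=0$ becomes \eqref{eq:coteqn}, which gives (ii) together with uniqueness.
\end{enumerate}
The value $\gamma=0$ is excluded because there all the coefficients are positive. One should also check that the two cot terms cannot both be singular at a point where $\sin(N\pi\gamma)\neq0$.

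On the outer range $x\in[0,|\epsilon|)$ (and symmetrically $x>1-|\epsilon|$), the index set gains or loses one endpoint term, namely $k=N$ or $k=-N-1$, depending on the sign of $\epsilon$. The same computation then gives $A(\gamma)+(x-\tfrac12)iB(\gamma)$ plus an extra term of modulus $\bigl||\epsilon|-x\bigr|/b$ with a different phase. I need to show this sum never vanishes.

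My first attempt is to take the component of the transform along the direction orthogonal to the phase of $B$. Alternatively, I would show directly that the real part, after the symmetric phase, keeps a fixed sign. A useful observation is that the coefficient sequence is then an asymmetric trapezoid whose center of mass is strictly off the half-integer. A cleaner fallback is a Kakeya–Eneström-type argument on the polynomial $\sum c_k z^k$ with monotone coefficients: monotone positive coefficients confine the zeros of such a polynomial to an annulus, and one shows that the strict monotonicity present in this regime rules out zeros on $|z|=1$, except possibly at $N$-th roots of unity. At those roots one then checks directly that the leftover endpoint term is nonzero.

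I expect this outer-range nonvanishing to be the main obstacle. The middle-range algebra is routine but must be carried out carefully so that the constant in front of $\cot(N\pi\gamma)$ comes out exactly as $b-N=\epsilon$.
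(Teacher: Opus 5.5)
You correctly identify the outer range as the main obstacle, but none of your proposed strategies there works, and that is a genuine gap.

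\textbf{Middle range.} This part is sound: the affine formula over $k=-N,\dots,N-1$ is valid on the whole closed interval $[|\epsilon|,1-|\epsilon|]$. However, the predicted normal form is off. For this index set the symmetrizing factor is $e^{-\pi i\gamma}$, and one gets
\[
b\,e^{-\pi i\gamma}(Z_{1/b}Q_2)(x,\gamma)=B(\gamma)\bigl[\Gamma(\gamma)+i(x-\tfrac12)\bigr],\qquad B=\frac{2\sin^2(N\pi\gamma)}{\sin(\pi\gamma)},\quad \Gamma=\tfrac12\cot(\pi\gamma)+\epsilon\cot(N\pi\gamma).
\]
So $A=B\Gamma$ carries $\sin^2(N\pi\gamma)$, not $c\sin(N\pi\gamma)$ with $c$ constant. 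With your form, $A(k/N)=\pm c\epsilon$, which is nonzero whenever $\epsilon\neq0$ and contradicts your own step (i). Once this is repaired, your middle-range argument does give (i) and (ii) for $x\in[|\epsilon|,1-|\epsilon|]$.

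\textbf{The gap: the outer range.} This is precisely the theorem's first assertion, that there are no zeros for $x\in[0,|\epsilon|)\cup(1-|\epsilon|,1)$. Your three routes each fail.
\begin{itemize}
\item Enestr\"om--Kakeya needs monotone coefficients, but $b-|x+k|$ rises and then falls. For such positive sequences the annulus $\min_k c_k/c_{k+1}\le|z|\le\max_k c_k/c_{k+1}$ always contains $|z|=1$, so it gives nothing.
\item The real part does not keep a fixed sign. Take $b=2.45$, $x=0.44$ (so $N=2$, $\epsilon=0.45$). Then $\operatorname{Re}\bigl(b\,Z_{1/b}Q_2\bigr)=2.01+2.9\cos2\pi\gamma+0.9\cos4\pi\gamma$, which is $0.01$ at $\gamma=\tfrac12$ and about $-0.06$ at $\gamma=0.4$. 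The real part of $b\,e^{-\pi i\gamma}Z_{1/b}Q_2$, i.e.\ the component orthogonal to the $x$-direction $iB$, is about $-0.245$ at $\gamma=0.45$ and $+0.245$ at $\gamma=0.55$.
\item The asymmetry heuristic cannot be decisive. The tent is just as asymmetric in the middle range for $x\neq\tfrac12$, and zeros at $\gamma=k/N$ do occur there.
\end{itemize}

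\textbf{The missing idea.} This is the paper's Proposition~\ref{prop:boundary}. On each outer piece the transform is affine in $x$, so it has a unique complex root $x'(\gamma)$; you must show $x'(\gamma)$ is real only for special $\gamma$, and then locate it.

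In your notation, for $\epsilon>0$ and $x\in[0,\epsilon)$,
\[
b\,e^{-\pi i\gamma}Z_{1/b}Q_2=B\bigl(\Gamma+i(x-\tfrac12)\bigr)+(\epsilon-x)e^{-(2N+1)\pi i\gamma}.
\]
Eliminating $x$ between the real and imaginary parts shows that $x'$ is real iff
\[
\sin(N\pi\gamma)\sin((N+1)\pi\gamma)\,(G_N+\epsilon D_N)=0 .
\]
The last factor is strictly positive: $G_N+\epsilon D_N=(1-\epsilon)G_N+\epsilon(N+1)F_{N+1}>0$ by \eqref{eq:GNpos} and Lemma~\ref{lem:noTwoZero}. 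This leaves two cases:
\begin{itemize}
\item At $\gamma=k/N$, only the nonzero term $(\epsilon-x)e^{-(2N+1)\pi i\gamma}$ survives.
\item At $\gamma=k/(N+1)$, one finds $x'=1-\epsilon\notin[0,\epsilon)$.
\end{itemize}
The other piece and the case $\epsilon<0$ go the same way. In general the positive factor is $(b-M)D_M+G_M$ with $M=\lfloor b\rfloor$, and the real roots are $|\epsilon|$ or $1-|\epsilon|$. Neither lies in $[0,|\epsilon|)\cup(1-|\epsilon|,1)$; the genuine zero at $1-|\epsilon|$ belongs to the closed middle interval.

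Without this positivity argument, your proof only covers points already known to lie in $[|\epsilon|,1-|\epsilon|]$.
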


Figure \ref{fig:zeros} shows a plot of the zeros given by Theorem \ref{thm:zerosZak} for $\tfrac{3}{2} \leq b\leq \tfrac{15}{2}$.
    
    \begin{figure}[t]
        \centering
        \includegraphics[width=.8\textwidth]{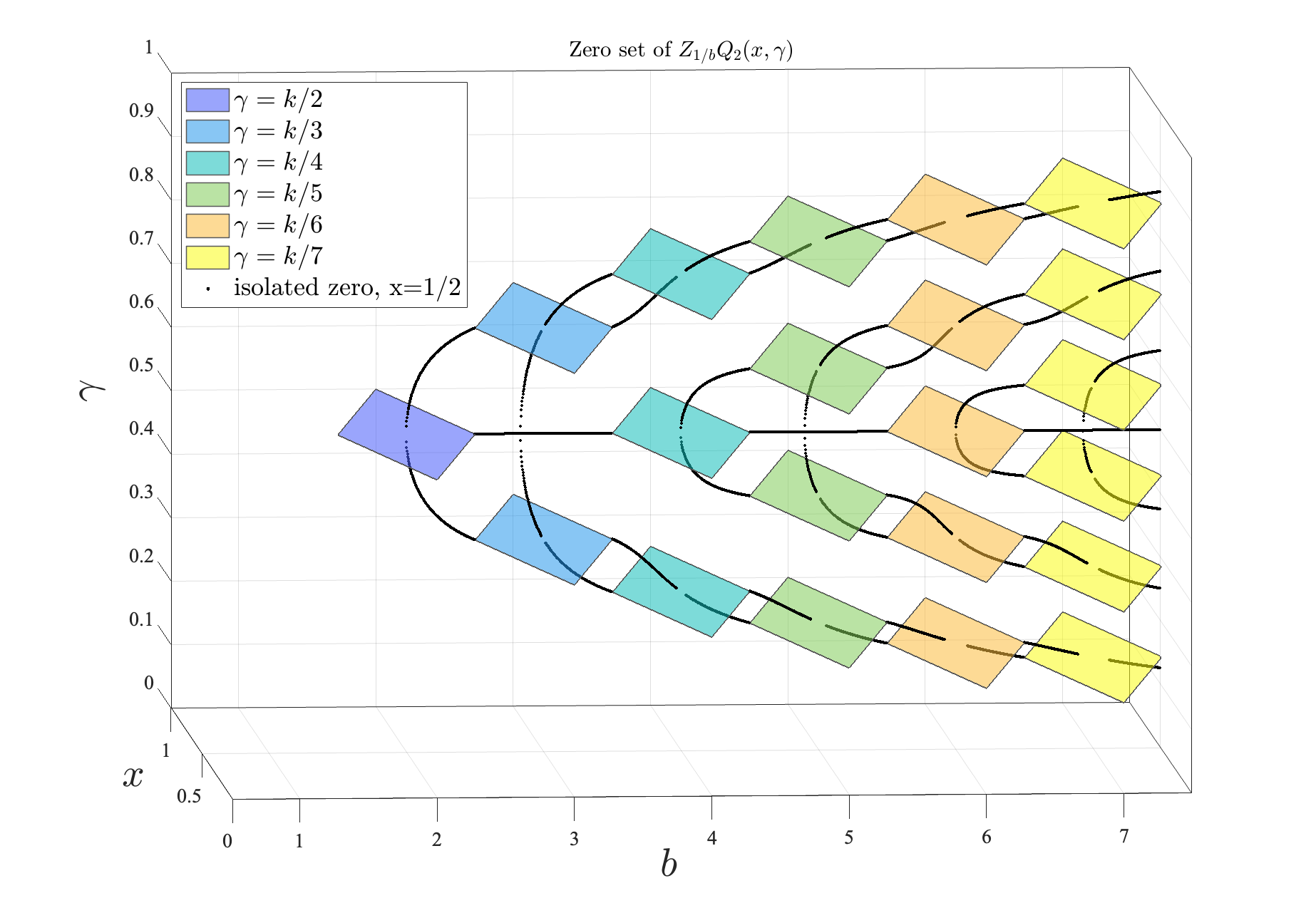}
        \caption{Roots of $Z_{\frac{1}{b}}Q_2$ given by Theorem \ref{thm:zerosZak}.}\label{fig:zeros}
    \end{figure}

As a direct consequence, the Lemvig--Nielsen obstruction for $n=2$ follows by the same zero-row argument used in \cite{Lemniel}; we include this derivation in \S \ref{sec:alternateproof} to show it fits naturally within the present framework before turning to our main result: that these obstructions are the only ones on the hyperbolas $ab=\frac1q$, $b\geq\frac32$, i.e., every point not excluded by Theorem \ref{thm:lemneil} lies in $\mathcal F(Q_2)$. This resolves, for $n=2$, the question left open in \cite{Lemniel}, where the authors state that they believe their obstruction range is optimal for $p=1$ but support this only by analogy to the first-order case (\cite{Lemniel}, Remark following Theorem~7).

\begin{theorem}\label{thm:main} Let $q$ be a positive integer. Along the curve \(ab = \frac{1}{q}\), for \(b \geq \frac{3}{2}\),
    \(\gabor(Q_2,a,b)\) is a frame if and only if \(\abs{F(b)} >
    \frac{1}{2q}\).
\end{theorem}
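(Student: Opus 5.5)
The plan is to reduce the frame property along $ab=1/q$ to a statement about common zeros of $q$ translates of the Zak transform, and then read these off from Theorem~\ref{thm:zerosZak}. For $ab=p/q$ with $p=1$, the Zibulski--Zeevi matrix of $\gabor(Q_2,a,b)$, written in the Zak variable $Z_{1/b}$ with $x$ normalized to period $1$, is a $1\times q$ row. Since $a=\frac{1}{qb}$, a translation by $a$ corresponds to a shift of $1/q$ in the normalized variable $x$, so the entries are (up to unimodular factors)
\[
(Z_{1/b}Q_2)\bigl(x+\tfrac{\ell}{q},\gamma\bigr),\qquad \ell=0,\dots,q-1 .
\]
The frame bounds are the essential infimum and supremum over $(x,\gamma)\in[0,1)^2$ of the squared norm of this row (up to a constant). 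Because $Q_2$ is continuous and compactly supported, $Z_{1/b}Q_2$ is a finite sum on compact sets, hence continuous and quasi-periodic. The upper bound is therefore automatic. By compactness, the lower bound holds if and only if there is no $(x,\gamma)$ at which all $q$ entries vanish simultaneously.

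Next I would handle the trivial case $q=1$. Here $ab=1$ and the condition $|F(b)|>1/2$ is never satisfied. Since $Z_{1/b}Q_2$ is continuous and quasi-periodic, it must vanish somewhere (alternatively, cite Balian--Low), so $\gabor(Q_2,a,b)$ is never a frame, which agrees with the statement. From now on take $q\ge 2$.

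Suppose $(x,\gamma)$ is a common zero. All $q$ points $x+\ell/q \pmod 1$ are zeros of $(Z_{1/b}Q_2)(\cdot,\gamma)$, so by Theorem~\ref{thm:zerosZak} $\gamma$ must fall into case (i) or case (ii).
\begin{enumerate}[(a)]
\item In case (ii), $x=\tfrac12$ is the only zero in $[0,1)$. Since $q\ge2$, the $q$ distinct points $x+\ell/q$ cannot all equal $\tfrac12$, so case (ii) never produces a common zero.
\item In case (i), we have $\gamma=k/N$ with $k\not\equiv 0 \pmod N$; such $k$ exists because $N=R(b)\ge 2$. Moreover, zeros occur exactly on $[|\epsilon|,1-|\epsilon|]$: the ``only if'' part of Theorem~\ref{thm:zerosZak} together with \eqref{eq:zakrootsieps} gives exactly this set. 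So a common zero exists if and only if some rotation of the $q$ equally spaced points $\{x+\ell/q\}$ on the circle $\R/\Z$ misses the open arc $(-|\epsilon|,|\epsilon|)$, which has length $2|\epsilon|$.
\end{enumerate}

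The remaining task is a one-line circle argument. Consecutive points are spaced exactly $1/q$ apart. Therefore some rotation places the open arc of length $2|\epsilon|$ inside a single gap (for instance, take $x=|\epsilon|$) if and only if $2|\epsilon|\le 1/q$. Hence there is a common zero, and the system is not a frame, exactly when $|F(b)|\le \frac{1}{2q}$. When $|F(b)|>\frac1{2q}$, every $\gamma$ has some $\ell$ with a nonvanishing entry, and the system is a frame. This recovers Theorem~\ref{thm:lemneil} for $n=2$, $p=1$ and proves its sharpness.

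The main point needing care is the Zibulski--Zeevi step. It requires fixing the normalization precisely: confirming that the shift is exactly $\ell/q$ in the normalized $x$ variable, and that quasi-periodicity, which only introduces phase factors, does not affect the zero sets. It also requires justifying that, for continuous $Z_{1/b}Q_2$, the absence of a common zero yields a uniform positive lower bound. I expect this to be routine given compactness of $[0,1]^2$ and the continuity of $Q_2$. The real content lies in Theorem~\ref{thm:zerosZak}, which is assumed here.
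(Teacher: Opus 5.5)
Your proposal is correct and follows essentially the same route as the paper: you reduce, via the $1\times q$ Zibulski--Zeevi row and continuity, to the nonexistence of a common zero of the $q$ translates $Z_{1/b}Q_2(\cdot - s/q,\gamma)$, then use Theorem~\ref{thm:zerosZak} to rule out case (ii) since $q\ge 2$, and handle case (i) by covering the circle with translates of the open arc of length $2|\epsilon|$. The only differences are cosmetic: you obtain both directions from a single ``iff'' statement about that arc (your witness $x=|\epsilon|$ mirrors the paper's $x_0=1-|\epsilon|$ from \S\ref{sec:alternateproof}), and you treat $q=1$ explicitly, which the paper leaves implicit.
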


This result is specific to $p=1$. The obstruction direction of Theorem~\ref{thm:lemneil} holds for general $p$, since the zero-row argument does not depend on $p=1$. The converse for $p>1$ remains open: it requires a uniform lower bound on the smallest singular value of the full $p\times q$ Zibulski--Zeevi matrix, rather than the zero set of the single scalar function $Z_{1/b}Q_2$, and we leave this to future work.

The remainder of this paper is organized as follows. Section~\ref{sec:prelim} recalls definitions and preliminaries underlying our approach. Section~\ref{sec:roots} contains our main technical contribution: Lemma~\ref{lem:zakexpr} gives a closed-form expression for the Zak transform of $Q_2$, and Theorem~\ref{thm:zerosZak}, proved in Section \ref{sec:charall}, gives a complete characterization of its zero set. As a first application, Section \ref{sec:alternateproof} gives a new proof of the Lemvig--Nielsen obstructions (Theorem~\ref{thm:lemneil}) for $n=2$. Section~\ref{sec:main} proves our main result, Theorem~\ref{thm:main}, establishing that these obstructions are sharp along $ab=1/q$.

\subsection{Notation and Preliminaries}\label{sec:prelim}

\begin{definition}
The \emph{Zak transform} of $f\in L^2(\mathbb R)$ with parameter $\lambda>0$ is
\[
    (Z_\lambda f)(x,\gamma) = \sqrt\lambda \sum_{k\in\mathbb Z} f(\lambda(x-k))\, e^{2\pi ik\gamma}, \qquad \text{a.e. } (x,\gamma)\in\mathbb R^2,
\]
with convergence in $L^2_{loc}(\mathbb R^2)$. This normalization makes $Z_\lambda$ a unitary map of $L^2(\mathbb R)$ onto $L^2([0,1)^2)$, satisfying
\[
    Z_\lambda f(x+1,\gamma) = e^{2\pi i\gamma}Z_\lambda f(x,\gamma), \qquad Z_\lambda f(x,\gamma+1) = Z_\lambda f(x,\gamma),
\]
so that $[0,1)^2$ is a fundamental domain for $|Z_\lambda f|$, independent of $\lambda$.
\end{definition}

We consider {\it rationally oversampled} Gabor systems, i.e.,

\[ ab=\frac{p}{q}, \qquad p,q \text{ positive integers with} \gcd(p,q)=1.\]

\begin{definition}[Zibulski--Zeevi matrix]\label{def:ZZ} The $p\times q$ \emph{Zibulski--Zeevi matrix} corresponding to the rationally oversampled Gabor system $\mathcal{G}(g,a,b)$ generated by a function $g\in L^2(\mathbb{R})$ is the matrix $\Phi^g(x,\gamma) = \big[\phi_0^g(x,\gamma)\ \cdots\ \phi_{q-1}^g(x,\gamma)\big]$ with column vectors $\phi_s^g(x,\gamma)\in\mathbb C^p$, $s=0,\ldots,q-1$, given by
\[
    \phi_s^g(x,\gamma) = \left(p^{-1/2}(Z_{1/b}g)\!\left(x-s\tfrac pq,\ \gamma+\tfrac rp\right)\right)_{r=0}^{p-1}, \qquad \text{a.e. } (x,\gamma)\in\mathbb R^2.
\]

\end{definition}

\begin{theorem}[Zibulski--Zeevi characterization \cite{zibulski1997analysis}]\label{thm:ZZchar}
Let $A,B>0$ and let $g\in L^2(\mathbb{R})$. Then, the rationally oversampled Gabor system $\mathcal G(g,a,b)$ generated by a function $g\in L^2(\mathbb{R})$ is a frame for $L^2(\mathbb R)$ with bounds $A,B$ if and only if $\{\phi^g_s(x,\gamma)\}_{s=0}^{q-1}$ is a frame for $\mathbb C^p$ with the same bounds, for a.e.\ $(x,\gamma)\in[0,1)^2$.
\end{theorem}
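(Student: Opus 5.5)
The plan is to transfer the frame inequality to the Zak domain, where it becomes a fiberwise statement. Write $\lambda=1/b$ and $a=\tfrac{p}{q}\lambda$, and set $F(x,\gamma)=(Z_\lambda f)(x,\gamma)$, $G=Z_\lambda g$. First I would record the two elementary intertwining rules, both checked directly from the series definition:
\begin{itemize}
\item modulation by $mb$ becomes multiplication by $e^{2\pi i m x}$;
\item translation by $\lambda t$ becomes the shift $x\mapsto x-t$.
\end{itemize}
In particular, translation by $\lambda n$ with $n\in\mathbb Z$ becomes multiplication by $e^{-2\pi i n\gamma}$ (up to the quasi-periodicity convention). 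Next I would split the translation index as $k=ql+s$ with $s\in\{0,\dots,q-1\}$ and $l\in\mathbb Z$. Then $T_{ak}=T_{\lambda pl}\,T_{as}$, and unitarity of $Z_\lambda$ gives
\[
\langle f, M_{mb}T_{ak}g\rangle=\int_0^1\!\!\int_0^1 F(x,\gamma)\,\overline{G(x-s\tfrac pq,\gamma)}\,e^{-2\pi i(mx-pl\gamma)}\,d\gamma\,dx .
\]

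The second step is a Parseval computation. Fold the $\gamma$-integral onto $[0,1/p)$ by writing $\gamma\mapsto\gamma+r/p$ for $r=0,\dots,p-1$. The exponential $e^{2\pi i pl\gamma}$ is invariant under $\gamma\mapsto \gamma+r/p$, so the integrand becomes $e^{-2\pi i(mx-pl\gamma)}$ times
\[
H_s(x,\gamma)=\sum_r F(x,\gamma+r/p)\,\overline{G(x-s\tfrac pq,\gamma+r/p)} .
\]
Since $\{\sqrt p\,e^{2\pi i(mx-pl\gamma)}\}_{m,l}$ is an orthonormal basis of $L^2([0,1)\times[0,1/p))$, summing over $m,l$ gives
\[
\sum_{m,k}|\langle f,M_{mb}T_{ak}g\rangle|^2=\int_{[0,1)\times[0,1/p)}\sum_{s=0}^{q-1}\big|\langle \vec F(x,\gamma),\phi^g_s(x,\gamma)\rangle_{\mathbb C^p}\big|^2\,p\,dx\,d\gamma .
\]
Here $\vec F=(F(x,\gamma+r/p))_r$, and the factor $p^{-1/2}$ in Definition~\ref{def:ZZ} combines with the constants. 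Similarly,
\[
\|f\|^2=\int_{[0,1)\times[0,1/p)}\|\vec F\|^2\,p\,dx\,d\gamma .
\]
I also need that the map $f\mapsto\vec F$ is onto $L^2([0,1)\times[0,1/p);\mathbb C^p)$. This holds because $Z_\lambda$ is onto $L^2([0,1)^2)$ and the folding is a unitary rearrangement.

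Third, I would conclude the equivalence. If $\{\phi_s^g(x,\gamma)\}$ is a frame for $\mathbb C^p$ with bounds $A,B$ for a.e. $(x,\gamma)$, then integrating the pointwise inequality gives the Gabor frame inequality immediately. For the converse, I argue by contradiction. Suppose the pointwise lower bound fails on a set of positive measure. Fix a countable dense set $\{v_j\}$ in the unit sphere of $\mathbb C^p$. Then for some $j$ and some $\delta>0$, the set $E=\{\sum_s|\langle v_j,\phi_s\rangle|^2<A-\delta\}$ has positive measure. Take $\vec F=\chi_E v_j$; this violates the integrated lower bound. The upper bound is handled the same way.

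I expect the main obstacle to be the bookkeeping in the first two steps rather than the measure-theoretic conclusion. Specifically, I must check the following:
\begin{itemize}
\item the shifts $x\mapsto x-sp/q$ and $\gamma\mapsto\gamma+r/p$ are compatible with the quasi-periodicity of $Z_\lambda$, so that the phases produced when leaving $[0,1)^2$ are unimodular and cancel inside $|\cdot|^2$;
\item the normalizations ($\sqrt\lambda$ in $Z_\lambda$, $p^{-1/2}$ in $\phi_s$, and the $p$ from the basis on $[0,1/p)$) combine so that the bounds $A,B$ are preserved exactly.
\end{itemize}
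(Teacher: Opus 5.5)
The paper gives no proof of this theorem. It is quoted from Zibulski--Zeevi and used as a black box, so there is no internal argument to compare with. Your proposal is the standard Zak-domain fiberization proof and is correct in substance. The intertwining rules, the splitting $k=ql+s$ with $T_{ak}=T_{\lambda pl}T_{as}$, the folding of $\gamma$ into $p$ strips, and the density argument with $\vec F=\chi_E v_j$ all go through. Your first bookkeeping worry is moot: $\phi^g_s$ is defined with the unreduced argument $x-s\tfrac pq$, and $Z_{1/b}g$ is defined on all of $\mathbb R^2$, so no reduction modulo $1$, and hence no phase, ever enters.

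Three small repairs are needed.

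(i) The factor $p$ in both of your displayed identities is wrong. We have $\langle f,M_{mb}T_{ak}g\rangle=p^{-1/2}\langle H_s,\sqrt p\,e^{2\pi i(mx-pl\gamma)}\rangle$ and $H_s=\sqrt p\,\langle\vec F,\phi^g_s\rangle_{\mathbb C^p}$, so the powers of $p$ cancel. Folding the $\gamma$-integral gives $\|f\|^2=\int\|\vec F\|^2$ directly, which is what your own remark that the folding is a unitary rearrangement says. The correct identities are
\[
\sum_{m,k}|\langle f,M_{mb}T_{ak}g\rangle|^2=\int_{[0,1)\times[0,1/p)}\sum_{s=0}^{q-1}|\langle\vec F,\phi^g_s\rangle|^2\,dx\,d\gamma,
\qquad
\|f\|^2=\int_{[0,1)\times[0,1/p)}\|\vec F\|^2\,dx\,d\gamma .
\]
Your slip multiplies both sides by the same $p$, so the conclusion that $A,B$ transfer exactly survives. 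Still, the identities as you wrote them are off by a factor $p$ for $p\ge 2$.

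(ii) $H_s$ is a priori only in $L^1$, being a product of two $L^2$ functions. Summing $|\langle f,M_{mb}T_{ak}g\rangle|^2$ over $m,l$ therefore needs one more fact: an $L^1$ function with square-summable Fourier coefficients lies in $L^2$. With that, the identity holds in $[0,\infty]$, which is all either direction uses.

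(iii) Your converse only yields the pointwise frame bounds for a.e.\ $(x,\gamma)\in[0,1)\times[0,1/p)$, while the statement asserts them on $[0,1)^2$. Close this with the $1$-periodicity of $Z_{1/b}g$ in $\gamma$. It gives $\phi^g_s(x,\gamma+\tfrac1p)=C\,\phi^g_s(x,\gamma)$, where $C$ is the cyclic coordinate shift on $\mathbb C^p$, the same unitary for every $s$. Hence the frame bounds of $\{\phi^g_s(x,\gamma)\}_s$ are invariant under $\gamma\mapsto\gamma+\tfrac1p$.
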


The Zibulski--Zeevi matrix is continuous for Gabor systems generated by $g=Q_n$, and therefore, to find counterexamples, it suffices to find a single point $(x,\gamma)\in [0,1)^2$ such that $\{\phi_{s}^g(x,\gamma)\}_{s=0}^{q-1}$ is not a spanning set.


\section{Roots of $Z_{1/b}Q_2$ }\label{sec:roots}
In this section we derive analytic formulas for $(Z_{1/b}Q_2)(x,\gamma)$ and characterize its roots.

Let $0<a<1$, $b\geq\frac{3}{2}$ and let $q$ be a positive integer such that $ab=\frac{1}{q}$.  Throughout this paper, we set \[N=R(b),\qquad \epsilon=F(b).\]

\begin{lemma}\label{lem:zakexpr}
For $\gamma\in(0,1)$ and $x\in[0,1)$,
\begin{align}
    (Z_{1/b}Q_2)(x,\gamma) &= b^{-3/2}\Bigg[\frac{(b+x+\ell)\,\omega^{\ell} - 2x\, - (b-x-n)\,\omega^{n}}{1-\omega}+ \frac{\omega\,(\omega^{\ell} - 2 + \omega^{n})}{(1-\omega)^2}\Bigg], \label{b2zak}
 \end{align}
where $\omega=e^{-2\pi i\gamma}$, and $\ell$ and $n$ are functions of $x$ given by
\[
    \ell(x) = \left\lceil -(x+b)\right\rceil, \qquad
    n(x) = \left\lceil b-x\right\rceil.
\]
\end{lemma}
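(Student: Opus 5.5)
We prove the formula by evaluating the Zak sum directly. Since $Q_2(t)=(1-|t|)_+$ is supported on $[-1,1]$, the defining series has only finitely many nonzero terms, so it converges pointwise and no $L^2_{loc}$ issues arise. With $\lambda=1/b$ we have
\[
(Z_{1/b}Q_2)(x,\gamma)=b^{-1/2}\sum_{k}Q_2\!\left(\tfrac{x-k}{b}\right)e^{2\pi i k\gamma}.
\]
Reindex with $j=-k$, so that $e^{2\pi ik\gamma}=\omega^{j}$ where $\omega=e^{-2\pi i\gamma}$. The coefficient is then
\[
Q_2\!\left(\tfrac{x+j}{b}\right)=b^{-1}\bigl(b-|x+j|\bigr)_+ ,
\]
which is nonzero exactly when $-b-x<j<b-x$.

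Next we pin down the summation range. For integer $j$, the condition $j>-b-x$ is equivalent to $j\ge \lceil -(x+b)\rceil=\ell$. If $-(x+b)$ is an integer, the endpoint term $j=\ell$ has coefficient $0$, so including it is harmless. Likewise $j<b-x$ is equivalent to $j\le n-1$ with $n=\lceil b-x\rceil$. Hence
\[
(Z_{1/b}Q_2)(x,\gamma)=b^{-3/2}\sum_{j=\ell}^{n-1}\bigl(b-|x+j|\bigr)\omega^j .
\]
Because $x\in[0,1)$, we have $x+j\ge0$ for $j\ge0$ and $x+j<0$ for $j\le-1$. The sum therefore splits into two sums with affine weights:
\[
\sum_{j=0}^{n-1}(b-x-j)\omega^j \quad\text{and}\quad \sum_{j=\ell}^{-1}(b+x+j)\omega^j .
\]
Here $b\ge 3/2$ guarantees $\ell\le -1$ and $n\ge 1$, so both ranges are nonempty.

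Each piece is handled by one summation-by-parts identity, valid for any $f$ and any $\omega\neq1$ (which holds since $\gamma\in(0,1)$):
\[
(1-\omega)\sum_{j=A}^{B-1}f(j)\omega^j=f(A)\omega^A-f(B)\omega^B+\sum_{j=A+1}^{B}\bigl(f(j)-f(j-1)\bigr)\omega^j .
\]
When $f$ is affine with slope $s$, the last sum equals $s\,\omega(\omega^A-\omega^B)/(1-\omega)$.

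For the first piece take $f(j)=b-x-j$ on $[0,n)$, with slope $-1$. This gives $(b-x)-(b-x-n)\omega^n-\omega(1-\omega^n)/(1-\omega)$.

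For the second piece take $f(j)=b+x+j$ on $[\ell,0)$, with slope $+1$. This gives $(b+x+\ell)\omega^\ell-(b+x)+\omega(\omega^\ell-1)/(1-\omega)$.

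Adding the two and dividing by $1-\omega$: the constants $(b-x)-(b+x)$ combine to $-2x$, and the two geometric remainders combine to $\omega(\omega^\ell-2+\omega^n)/(1-\omega)$. This is exactly \eqref{b2zak}.

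The main point needing care is the boundary bookkeeping: the ceiling definitions of $\ell,n$ must capture exactly the support, including the degenerate cases where $b\pm x$ is an integer. These cases are harmless because the excluded or included endpoint terms have zero weight. The rest is routine algebra.
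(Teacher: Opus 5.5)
Your proof is correct and takes essentially the same route as the paper: reindex so the phase is $\omega^j$, split the finite sum at $j=0$ into the two linear branches of $Q_2$ with endpoints $\ell=\lceil -(x+b)\rceil$ and $n=\lceil b-x\rceil$, and evaluate the two arithmetico-geometric sums. The paper evaluates these with a general split point $m$ and then sets $m=0$, whereas you use summation by parts; your explicit check that the endpoint term has zero weight when $x+b$ is an integer is a small gain in care over the paper's write-up.
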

\begin{proof}
    Noting that
    \[
        (Z_\lambda f)(x,\gamma) = \frac{1}{\sqrt{\lambda}} \sum_{k \in \Z} f(\lambda(x - k)) e^{2\pi ik\gamma} = \frac{1}{\sqrt{\lambda}} \sum_{k \in \Z} f(\lambda(x + k)) e^{-2\pi ik\gamma}
    \]
    and splitting the latter sum into those points which lie on either branch of the second order B-spline, we obtain two sums over arithmetico-geometric sequences, which may be evaluated as
    \[
        \begin{aligned}
        (Z_{\frac{1}{b}} Q_2)(x,\gamma) &= \frac{1}{\sqrt{b}} \sum_{k \in \Z} Q_2(\tfrac{1}{b}(x + k)) e^{-2\pi ik\gamma}\\
        &= \left(\frac{1}{\sqrt{b}} \sum_{k = \ell}^{m-1} Q_2(\tfrac{1}{b}(x + k)) e^{-2\pi ik\gamma}\right) + \left(\frac{1}{\sqrt{b}} \sum_{k = m}^{n-1} Q_2(\tfrac{1}{b}(x + k)) e^{-2\pi ik\gamma}\right)\\
        &= \frac{1}{\sqrt{b}}\frac{\left(1 + \frac{1}{b}(x + \ell)\right) e^{-2\pi i\ell \gamma} - \left(1 + \frac{1}{b}(x + m)\right) e^{-2\pi im\gamma}}{1 - e^{-2\pi i\gamma}}\\
        &+ \frac{1}{\sqrt{b}}\frac{\frac{1}{b} e^{-2\pi i\gamma}(e^{-2\pi i\ell\gamma} - e^{-2\pi im\gamma})}{(1 - e^{-2\pi i\gamma})^2}\\
        &+ \frac{1}{\sqrt{b}}\frac{\left((1 - \frac{1}{b}(x + m)\right)) e^{-2\pi im\gamma} - \left((1 - \frac{1}{b}(x + n)\right) e^{-2\pi in\gamma}}{1 - e^{-2\pi i\gamma}}\\
        &- \frac{1}{\sqrt{b}}\frac{\frac{1}{b} e^{-2\pi i\gamma}(e^{-2\pi im\gamma} - e^{-2\pi in\gamma})}{(1 - e^{-2\pi i\gamma})^2}\\
        &= b^{-3/2} \left(\frac{(b + x + \ell) \omega^\ell - 2(x + m) \omega^m - (b - x - n) \omega^n}{1 - \omega} + \frac{\omega (\omega^\ell - 2\omega^m + \omega^n)}{(1 - \omega)^2}\right).
        \end{aligned}
    \]
    The appropriate splitting is obtained from conditioning \(\ell\), \(m\), and \(n\) on the inequalities
    \[
        \frac{1}{b}(x + \ell) \geq -1, \qquad
        \frac{1}{b}(x + m) \geq 0, \qquad
        \frac{1}{b}(x + n) \geq 1
    \]
    and choosing the least integer satisfying each. Furthermore, for all \(x \in \left[0,1\right)\), we have \(m = 0\). With this final substitution, we obtain the stated expression for \(Z_{1/b}Q_2\).
\end{proof}

\begin{figure}

\begin{tikzpicture}[scale=.8,every node/.style={font=\small}]

\begin{scope}[shift={(0,4)}]
\node[anchor=west] at (-6,-1.5) {\normalsize Case I: $\epsilon=0$ };

\draw[->] (0,-1.5) -- (12.6,-1.5) node[right] {$x$};
\draw[very thick, teal] (0,-1.5) -- (12,-1.5);
\node[below] at (0,-1.5) {$0$};
\node[below] at (12,-1.5) {$1$};
\fill[teal] (0,-1.5) circle (2pt);
\draw[teal] (12,-1.5) circle (2pt);

\draw[thick, teal, decorate,decoration={brace,amplitude=6pt}] (0,-1.25) -- (12,-1.25) node[midway, above=5pt] {$I_\epsilon$};
\end{scope}

\begin{scope}[shift={(0,2)}]
\node[anchor=west] at (-6,-1.5) {\normalsize Case II: $0<|\epsilon|<\tfrac12$};
\draw[->] (0,-1.5) -- (12.6,-1.5) node[right] {$x$};
\draw[very thick, orange!80!black] (0,-1.5) -- (3,-1.5);
\draw[very thick, teal] (3,-1.5) -- (9,-1.5);
\draw[very thick, orange!80!black] (9,-1.5) -- (12,-1.5);

\node[below] at (0,-1.5) {$0$};
\node[below] at (3,-1.5) {$|\epsilon|$};
\node[below] at (9,-1.5) {$1-|\epsilon|$};
\node[below] at (12,-1.5) {$1$};
\draw[thick, orange!80!black, decorate,decoration={brace,amplitude=6pt}] (0,-1.25) -- (3,-1.25) node[midway, above=5pt] {$I_\epsilon^{-}$};
\draw[thick, teal, decorate,decoration={brace,amplitude=6pt}] (3,-1.25) -- (9,-1.25) node[midway, above=5pt] {$I_\epsilon$};
\draw[thick, orange!80!black, decorate,decoration={brace,amplitude=6pt}] (9,-1.25) -- (12,-1.25) node[midway, above=5pt] {$I_\epsilon^+$};
\fill[orange!80!black] (0,-1.5) circle (2pt);
\fill[teal] (3,-1.5) circle (2pt);
\fill[orange!80!black] (9,-1.5) circle (2pt);
\draw[orange!80!black] (12,-1.5) circle (2pt);
\end{scope}

\begin{scope}[shift={(0,0)}]
\node[anchor=west] at (-6,-1.5) {\normalsize Case III: $|\epsilon|=\tfrac12$ };
\draw[->] (0,-1.5) -- (12.6,-1.5) node[right] {$x$};
\node[below] at (0,-1.5) {$0$};
\node[below] at (6,-1.5)  {$\tfrac12$};
\node[below] at(12,-1.5) {$1$};
 \draw[very thick, orange!80!black] (0,-1.5) -- (6,-1.5);
\draw[very thick, orange!80!black] (6,-1.5) -- (12,-1.5);
\draw[thick,orange!80!black, decorate,decoration={brace,amplitude=6pt}] (0,-1.25) -- (6,-1.25) node[midway, above=5pt] { $I_\epsilon^-$};
\draw[thick, orange!80!black, decorate,decoration={brace,amplitude=6pt}] (6,-1.25) -- (12,-1.25) node[midway, above=5pt] {$I_\epsilon^+$};
\fill[orange!80!black] (0,-1.5) circle (2pt);
\fill[orange!80!black] (6,-1.5) circle (2pt);
\draw[orange!80!black] (12,-1.5) circle (2pt);
\end{scope}
\end{tikzpicture}
\caption{The three cases of $\epsilon=F(b)$ in the region decomposition of $[0,1)$ underlying Theorem~\ref{thm:zerosZak}. Case II is the general case; Case I and Case III are its two limits, as $|\epsilon|\to0$ and $|\epsilon|\to\tfrac12$ respectively.
On $I_\epsilon: \ell=-N, n=N$, where $N=R(b)$.
on $I_\epsilon^+:\ell=-(\lfloor b\rfloor+1),  n=\lfloor b\rfloor$, and on  $I_\epsilon^-: \ell=-\lfloor b\rfloor,  n=\lfloor b\rfloor+1$.
}
\label{fig:eps-regions}
\end{figure}

Figure \ref{fig:eps-regions} shows the partition of subintervals of $[0,1)$ in which $\ell$ and $n$ are constant. There are up to three subintervals depending on $|\epsilon|$:
\begin{enumerate}
    \item (Cases I and II): If $0\leq |\epsilon|<\tfrac{1}{2}$, define $I_\epsilon = [|\epsilon|, 1-|\epsilon|)$. Here $(\ell,  n) \equiv (-N,  N)$.
    \item (Cases II and III): If $0<|\epsilon|\leq \frac{1}{2}$, define $I_\epsilon^{-}:=[0, |\epsilon|)$, on which $(\ell,  n) \equiv (-\lfloor b\rfloor,  \lfloor b\rfloor+1)$ and $I_\epsilon^{+}:=[\,1-|\epsilon|,1)$, on which $(\ell,  n) \equiv (-(\lfloor b\rfloor+1),  \lfloor b\rfloor)$.
\end{enumerate}

For $n>0$, $\ell \neq 0$, $\omega=e^{-2\pi i \gamma}$, with $\gamma \neq 0$, we will use the following fact: by the triangle inequality, a nontrivial common solution to $\omega^\ell-2+\omega^n=0$ requires $\gcd(|\ell|,n)>1$. This only occurs for $x \in I_{\epsilon}$ (Cases I and II).

\subsection{Existence of roots of $Z_{1/b}Q_2$ for $\gcd(|\ell|, n)>1$, $\gamma\neq 0$}
The next lemma proves that  $(Z_{1/b}Q_2)(x,\gamma)=0$ for $\gcd(|\ell|,n)>1$ (equivalently $x\in I_\epsilon$) and $\gamma=k/N$ for $k\neq 0\mod N$.  

\begin{lemma}\label{lem:zerosZak1}
$(Z_{1/b}Q_2)(x,\frac kN)=0$ for all $x\in I_\epsilon$ and $k\not\equiv0\pmod N$.
\end{lemma}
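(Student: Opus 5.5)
The plan is to substitute directly into the closed form of Lemma~\ref{lem:zakexpr}. On $I_\epsilon$ we have $(\ell,n)=(-N,N)$ (Cases I and II). We set $\gamma=k/N$ with $k\not\equiv 0\pmod N$, so that $\omega=e^{-2\pi i k/N}$ satisfies $\omega^N=\omega^{-N}=1$ and $\omega\neq 1$. The hypothesis $\omega\ne1$ is exactly what makes both denominators $1-\omega$ and $(1-\omega)^2$ nonzero, so formula \eqref{b2zak} is valid at this $\gamma$; here we also use $\gamma=k/N\in(0,1)$ after reducing $k$ modulo $N$.

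We then evaluate the two brackets separately. The second term has numerator $\omega(\omega^{-N}-2+\omega^{N})=\omega(1-2+1)=0$, so it vanishes. In the first term, the numerator becomes
\[
(b+x-N)\omega^{-N}-2x-(b-x-N)\omega^{N} = (b+x-N)-2x-(b-x-N)=0 .
\]
Both terms therefore vanish, and $(Z_{1/b}Q_2)(x,k/N)=0$ for every $x\in I_\epsilon$.

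The only real point to check is the bookkeeping that $\ell=-N$ and $n=N$ hold throughout $I_\epsilon=[|\epsilon|,1-|\epsilon|)$. I would verify this directly from $\ell(x)=\lceil -(x+b)\rceil$ and $n(x)=\lceil b-x\rceil$ with $b=N+\epsilon$. For $n$: when $x\in[|\epsilon|,1-|\epsilon|)$ we have $b-x\in(N+\epsilon-1+|\epsilon|,\,N+\epsilon-|\epsilon|]$. This interval lies in $(N-1,N]$ in both sign cases of $\epsilon$, so $n=N$. The computation for $\ell$ is symmetric, giving $-(x+b)\in(-N-1,-N]$ and hence $\ell=-N$.

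One endpoint needs separate attention: the statement \eqref{eq:zakrootsieps} includes $x=1-|\epsilon|$, which falls in $I_\epsilon^+$ when $\epsilon\ne0$. That point could be handled by continuity of $Z_{1/b}Q_2$ in $x$, if the paper needs it.
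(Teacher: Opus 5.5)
Your proposal is correct and is the paper's proof: substitute $(\ell,n)=(-N,N)$ and $\omega^{\pm N}=1$, $\omega\neq 1$ into \eqref{b2zak}, and both numerators vanish. Your explicit check that $\ell=-N$ and $n=N$ throughout $I_\epsilon$ supplies a step the paper only asserts. Your continuity remark for the endpoint $x=1-|\epsilon|$ is also valid, though the lemma as stated does not need it since $I_\epsilon$ is half-open; the paper instead recovers that endpoint in Step~3 of the proof of Theorem~\ref{thm:zerosZak}.
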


\begin{proof}
For $x\in I_\epsilon$, $(\ell(x), n(x)) = (-N,  N)$. Since, $k\not \equiv 0\mod N$,  \eqref{b2zak} is well defined for $\gamma=k/N$, and becomes
\begin{align*}
    (Z_{1/b}Q_2)(x,k/N) &= b^{-3/2}\left[\frac{(b+x-N)\omega^{-N} - 2x - (b-x-N)\omega^{N}}{1-\omega} + \frac{\omega(\omega^{-N}-2+\omega^{N})}{(1-\omega)^2}\right]\\
    &= b^{-3/2}\left[\frac{(b+x-N) - 2x - (b-x-N)}{1-\omega} + \frac{\omega(1-2+1)}{(1-\omega)^2}\right]\\
    &=0,
\end{align*}
where $\omega=e^{-2\pi i k/N}$.

\end{proof}

\subsection{Explicits root of $Z_{1/b}Q_2$ for for $\gcd(|\ell|, n)=1$, $\gamma\neq 0$}

For $\gamma\neq 0$, it is apparent from formula \eqref{b2zak} that on any subset of $[0,1)$ with constant $(\ell,n)$, $(Z_{1/b}Q_2)(x,\gamma)$ is affine in $x$. If $x\in I_\epsilon^\pm$ (Cases II and III), then $(\ell,n)\in\{(-M,M{+}1),(-(M{+}1),M)\}$, so $\gcd(|\ell|,n)=\gcd(M,M{+}1)=1$; by the triangle inequality, this forces $\omega^\ell-2+\omega^n\ne0$ for every $\omega\ne1$. In this case, we can solve for the root using \eqref{b2zak}:
\begin{align}
    x' = -\frac{\omega}{1-\omega} - \frac{(b+\ell)\omega^{\ell}-(b-n)\omega^{n}}{\omega^{\ell}-2+\omega^{n}}, \qquad \omega=e^{-2\pi i \gamma}. \label{eq:xzero}
\end{align}
Notice that $x'$ is a genuine zero of $Z_{1/b}Q_2$ only if $x'=x$.

The goal of the next several results is to locate the root given by \eqref{eq:xzero} within the set $(x, \gamma)\in [0,1)\times(0,1)$.

\begin{definition}
Let $N$ be a nonnegative integer. The \emph{Dirichlet kernel} $D_N(\gamma)$ and \emph{Fej\'er kernel} $F_N(\gamma)$ are defined to be 
\[
    D_N(\gamma) = \sum_{k=-N}^N e^{2\pi ik\gamma}, \qquad F_N(\gamma) = \frac{1}{N}\sum_{k=0}^{N-1} D_k(\gamma).
\]

\end{definition}

\begin{lemma}[Properties of the Dirichlet and Fej\'er kernels]
For $N\in \mathbb{N}$, and $\gamma\in \mathbb{R}$, we have
\begin{align}
    G_N(\gamma) \defeq (N+1) F_{N+1}(\gamma) - D_N(\gamma) = \left(\frac{\sin(N\pi \gamma)}
    {\sin(\pi \gamma)}\right)^2 \geq 0, \label{eq:GNpos}
\end{align}
and
\begin{align}
    \sum_{k=1}^N k \cos(2\pi k \gamma) = \frac{N+1}{2}\left(D_N(\gamma) -
    F_{N+1}(\gamma)\right).\label{eq:Fn1cos}
\end{align}
\end{lemma}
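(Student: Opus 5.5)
We prove the two identities in \eqref{eq:GNpos} and \eqref{eq:Fn1cos} by direct manipulation of finite exponential sums. Throughout, write $z=e^{2\pi i\gamma}$. We first treat $\gamma\notin\Z$, where $\sin(\pi\gamma)\neq0$, and recover $\gamma\in\Z$ at the end by continuity, since both sides of each identity are trigonometric polynomials.

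\textbf{Step 1: closed forms for the kernels.} The standard closed forms are
\[
D_k(\gamma)=\frac{\sin((2k+1)\pi\gamma)}{\sin(\pi\gamma)}, \qquad M F_M(\gamma)=\sum_{k=0}^{M-1}D_k(\gamma)=\left(\frac{\sin(M\pi\gamma)}{\sin(\pi\gamma)}\right)^2 .
\]
For the second, use $2\sin(\pi\gamma)\sin((2k+1)\pi\gamma)=\cos(2k\pi\gamma)-\cos(2(k+1)\pi\gamma)$. Summing over $k=0,\dots,M-1$ telescopes to $1-\cos(2M\pi\gamma)=2\sin^2(M\pi\gamma)$.

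\textbf{Step 2: the identity \eqref{eq:GNpos}.} Apply the Fej\'er closed form with $M=N+1$. This gives
\[
G_N=\frac{\sin^2((N+1)\pi\gamma)-\sin(\pi\gamma)\sin((2N+1)\pi\gamma)}{\sin^2(\pi\gamma)}.
\]
It remains to verify $\sin^2 A-\sin B\sin C=\sin^2(N\pi\gamma)$, where $A=(N+1)\pi\gamma$, $B=\pi\gamma$ and $C=(2N+1)\pi\gamma$. Write $\sin B\sin C=\tfrac12[\cos(2N\pi\gamma)-\cos(2(N+1)\pi\gamma)]$ and $\sin^2A=\tfrac12[1-\cos(2(N+1)\pi\gamma)]$. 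The difference is then $\tfrac12[1-\cos(2N\pi\gamma)]=\sin^2(N\pi\gamma)$. Nonnegativity is immediate because the result is a square.

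Alternatively, one can avoid closed forms. From $D_k=\sum_{|j|\le k}z^j$ we get $(N+1)F_{N+1}=\sum_{|j|\le N}(N+1-|j|)z^j$. Subtracting $D_N$ gives $\sum_{|j|\le N-1}(N-|j|)z^j=NF_N$, which equals $(\sin N\pi\gamma/\sin\pi\gamma)^2$ by Step 1. This version also works at $\gamma\in\Z$ directly.

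\textbf{Step 3: the identity \eqref{eq:Fn1cos}.} Compare Fourier coefficients. We have $D_N-F_{N+1}=\sum_{|j|\le N}\bigl(1-\tfrac{N+1-|j|}{N+1}\bigr)z^j=\sum_{|j|\le N}\tfrac{|j|}{N+1}z^j$. Multiplying by $\tfrac{N+1}{2}$ gives $\tfrac12\sum_{|j|\le N}|j|z^j=\sum_{k=1}^N k\cos(2\pi k\gamma)$, since the $j=0$ term vanishes and the terms $\pm k$ pair into cosines.

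The only mildly delicate point is the bookkeeping of the coefficient $(N+1-|j|)$ in the expansion of $(N+1)F_{N+1}$. It comes from counting the $k\in\{|j|,\dots,N\}$ with $D_k\ni z^j$. Working at the level of coefficients avoids any issue at $\gamma\in\Z$.
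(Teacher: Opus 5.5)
Your proof is correct, and for \eqref{eq:Fn1cos} it matches the paper: both compare the Fourier (cosine) coefficients of $D_N$ and $F_{N+1}$.

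For \eqref{eq:GNpos} your route is different. The paper starts as your alternative does, expanding $G_N=N+2\sum_{k=1}^N(N-k)\cos(2\pi k\gamma)$. It then writes this as the real part of an arithmetico-geometric sum and evaluates that sum in closed form. Finally it simplifies the resulting quotient of cosines to $8\sin^2(N\pi\gamma)\sin^2(\pi\gamma)/(8\sin^4(\pi\gamma))$.

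You instead reduce to the classical closed forms of $D_k$ and of $MF_M$ (the latter by telescoping), plus one product-to-sum identity. In your alternative, you recognize $G_N$ as the unnormalized Fej\'er kernel $NF_N$.

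That identification is even cheaper than your coefficient bookkeeping suggests. Directly from the definition, $(N+1)F_{N+1}=\sum_{k=0}^{N}D_k=NF_N+D_N$, so $G_N=NF_N$ with no computation at all. The only real content of \eqref{eq:GNpos} is then the classical closed form of the Fej\'er kernel, and the nonnegativity $G_N\ge 0$ is explained rather than merely computed.

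The paper's argument is self-contained but longer, and it never points out that $G_N$ is a Fej\'er kernel. You also address the removable singularity of $\bigl(\sin(N\pi\gamma)/\sin(\pi\gamma)\bigr)^2$ at $\gamma\in\Z$, which the paper leaves implicit.
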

\begin{proof}
To prove \eqref{eq:GNpos}, see that
\[
    \begin{aligned}
    (N+1) F_{N+1}(\gamma) - D_N(\gamma) &= \left(N+1 + 2\sum_{k=1}^N (N+1-k)
    \cos(2\pi k\gamma)\right) - \left(1 + 2\sum_{k=1}^N \cos(2\pi k\gamma)\right)\\
    &= N + 2\sum_{k=1}^N (N-k) \cos(2\pi k\gamma)\\
    &= \operatorname{Re}\left(N + 2\sum_{k=1}^N (N-k) e^{2\pi ik\gamma}\right)\\
    &= \operatorname{Re}\left(\frac{N(1-e^{4\pi i\gamma}) - 2e^{2\pi i\gamma}(1 - e^{2\pi i N\gamma})}
    {(1-e^{2\pi i\gamma})^2}\right)\\
    &= \frac{2 - 2\cos(2\pi \gamma) + \cos(2\pi (N-1)\gamma) - 2\cos(2\pi N\gamma) +
    \cos(2\pi (N+1)\gamma)}{3 - 4\cos(2\pi \gamma) + \cos(4\pi \gamma)}\\
    &= \frac{8\sin^2(N\pi \gamma) \sin^2(\pi \gamma)}{8\sin^4(\pi \gamma)}\\
    &= \left(\frac{\sin(N\pi \gamma)}{\sin(\pi \gamma)}\right)^2.
    \end{aligned}
\]
For \eqref{eq:Fn1cos}, see that
\[
    \begin{aligned}
    \frac{N+1}{2}(D_N(\gamma) - F_{N+1}(\gamma)) &= \frac{N+1}{2} \sum_{k=-N}^N
    e^{2\pi ik\gamma} - \frac{1}{2} \sum_{k=0}^N D_k(\gamma)\\
    &= \frac{N+1}{2}\left(1 + 2\sum_{k=1}^N \cos(2\pi k\gamma)\right) -
    \frac{1}{2}\left(N+1 + 2\sum_{k=1}^N (N+1-k) \cos(2\pi k\gamma)\right)\\
    &= \sum_{k=1}^N k\cos(2\pi k\gamma).
    \end{aligned}
\]
\end{proof}

\begin{lemma}\label{lem:noTwoZero}
    For $N>1$, no two of \(D_N\), \(F_{N+1}\), and \(G_N\) are simultaneously zero.
\end{lemma}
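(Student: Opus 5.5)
The plan is to pass to the closed forms of the three kernels and then use the linear relation \eqref{eq:GNpos}. That relation reduces the claim to showing that one particular pair cannot vanish together, and this is an elementary statement about integrality.

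\emph{Step 1: integer $\gamma$.} For $\gamma\in\Z$ every exponential equals $1$. This gives $D_N(\gamma)=2N+1$, $F_{N+1}(\gamma)=N+1$ and $G_N(\gamma)=N^2$, all nonzero. From now on we may therefore assume $\gamma\notin\Z$, so that $\sin(\pi\gamma)\neq0$.

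\emph{Step 2: closed forms for non-integer $\gamma$.} We use the standard identities
\[
D_N(\gamma)=\frac{\sin((2N+1)\pi\gamma)}{\sin(\pi\gamma)},\qquad (N+1)F_{N+1}(\gamma)=\left(\frac{\sin((N+1)\pi\gamma)}{\sin(\pi\gamma)}\right)^2 .
\]
The closed form for $G_N$ is \eqref{eq:GNpos}. The $F_{N+1}$ identity follows either from the usual Fej\'er kernel computation or from \eqref{eq:GNpos} with $N$ replaced by $N+1$ after adding $D_{N+1}$; actually the cleanest route is to quote it as the classical Fej\'er formula.

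\emph{Step 3: reduction to one pair.} By \eqref{eq:GNpos}, $G_N=(N+1)F_{N+1}-D_N$. Hence if any two of $D_N$, $F_{N+1}$, $G_N$ vanish at some $\gamma$, the third vanishes there as well. It therefore suffices to show that $F_{N+1}$ and $G_N$ never vanish simultaneously.

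\emph{Step 4: the key pair.} Suppose $F_{N+1}(\gamma)=G_N(\gamma)=0$ with $\gamma\notin\Z$. The closed forms then force
\[
\sin((N+1)\pi\gamma)=0 \quad\text{and}\quad \sin(N\pi\gamma)=0,
\]
so $(N+1)\gamma\in\Z$ and $N\gamma\in\Z$. Their difference $\gamma$ is then an integer, contradicting $\gamma\notin\Z$.

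The only step needing any care is Step 2, namely making sure the Fej\'er closed form is stated with the correct index. As the argument shows, the hypothesis $N>1$ is not actually used, though it does no harm.
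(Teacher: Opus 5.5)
Your argument is correct, and it takes a slightly different route from the paper's. The paper writes down all three zero sets explicitly, namely $\{k/(2N+1)\}$, $\{k/(N+1)\}$ and $\{k/N\}$ with $k$ a nonzero residue. It then appeals to the three coprimality facts $\gcd(N,N+1)=\gcd(N,2N+1)=\gcd(N+1,2N+1)=1$.

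You instead use that \eqref{eq:GNpos} is a linear relation among the three kernels, so any two vanishing forces the third. This collapses the three pairwise checks into the single one for $F_{N+1}$ and $G_N$, which is just $\gcd(N,N+1)=1$ in the form $(N+1)\gamma-N\gamma=\gamma$. As a result you never need the zero set of $D_N$ or any arithmetic involving $2N+1$; the closed form for $D_N$ in your Step 2 is in fact unused. The paper's version costs three checks but exhibits the zero sets themselves, which carries more information than the bare non-coincidence statement.

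Your side derivation of the Fej\'er formula from \eqref{eq:GNpos} at index $N+1$ is valid via the definitional identity $(N+2)F_{N+2}-D_{N+1}=(N+1)F_{N+1}$, which also shows $G_N=NF_N$ in general. You are also right that $N>1$ plays no role: for $N=1$ one has $G_1\equiv 1$.
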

\begin{proof}
    The zero sets of \(D_N\), \(F_{N+1}\), and \(G_N\) are
    \[
        \begin{aligned}
       \mathcal{N}(D_N) &= \left\lbrace \frac{k}{2N+1} : k \not\equiv 0 \pmod{2N+1}\right\rbrace,\\
        \mathcal{N}(F_{N+1}) &= \left\lbrace \frac{k}{N+1} : k \not\equiv 0 \pmod{N+1} \right\rbrace,\\
        \mathcal{N}(G_N) &= \left\lbrace \frac{k}{N} : k \not\equiv 0 \pmod{N} \right\rbrace.
        \end{aligned}
    \]
    As \(\gcd(N, N+1) = \gcd(N, 2N+1) = \gcd(N+1, 2N+1) = 1\), these sets are disjoint.
\end{proof}

The following propositions characterizes all roots of $Z_{1/b}Q_2(\cdot, \gamma)$ for $\gamma\neq 0$ contained in intervals $ I_\epsilon^{\pm}$.

\begin{prop}\label{prop:boundary}
Let $b>1$ and $M=\lfloor b\rfloor$. Let $(x,\gamma)\in[0,1)\times(0,1)$ and $\omega=e^{-2\pi i\gamma}$.
\begin{enumerate}[(a)]
\item \textbf{$(\ell(x),n(x))=(-M,M{+}1)$.} Equation \eqref{eq:xzero} has a real solution $x'$ if and only if $\omega$ is an $M$th or $(M{+}1)$th root of unity; in that case,
\[
    x' = \begin{cases} b-M, & \text{if } \omega^M=1, \\ 1-(b-M), & \text{if } \omega^{M+1}=1, \end{cases}
\]
and $x'$ is a genuine zero of $Z_{1/b}Q_2$ only if $x'=x$.

\item \textbf{$(\ell(x),n(x))=(-(M{+}1),M)$.} Equation \eqref{eq:xzero} has a real solution $x'$ if and only if $\omega$ is an $M$th or $(M{+}1)$th root of unity; in that case,
\[
    x' = \begin{cases} 1-(b-M), & \text{if } \omega^M=1, \\ b-M, & \text{if } \omega^{M+1}=1, \end{cases}
\]
and $x'$ is a genuine zero of $Z_{1/b}Q_2$ only if $x'=x$.
\end{enumerate}
\end{prop}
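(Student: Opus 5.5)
We will treat case (a) in detail; case (b) is the same computation with $M$ and $M+1$ interchanged. Set $\ell=-M$, $n=M+1$, so $\omega^{\ell}=\overline{\omega}^{M}$ and $\omega^{n}=\omega^{M+1}$. The formula \eqref{eq:xzero} is legitimate because $\gcd(M,M+1)=1$ forces $\omega^{-M}-2+\omega^{M+1}\neq0$ for $\omega\neq1$. The task is then to decide when the complex number $x'$ is real. The cleanest way is to return to \eqref{b2zak} and write $Z=b^{-3/2}(\alpha x+\beta)$, with
\[
\alpha=\frac{\omega^{-M}-2-\omega^{M+1}}{1-\omega}\cdot(\text{sign-adjusted})
\]
(the coefficient of $x$), and $\beta$ the $x$-independent part. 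Then $x'=-\beta/\alpha$, and $x'$ is real iff $\operatorname{Im}(\beta\overline{\alpha})=0$.

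\textbf{Reducing to trigonometric sums.} Multiply through by the symmetrizing factor $\omega^{-1/2}$, or equivalently use $1-\omega=-2i\sin(\pi\gamma)\,\omega^{1/2}$. This turns $\alpha$ and $\beta$ into expressions in $\cos$ and $\sin$ of $2\pi k\gamma$. A more transparent route is to write $Z$ directly as the finite sum $\sum_{k=\ell}^{n-1}Q_2(\frac{x+k}{b})e^{-2\pi ik\gamma}$ and collect coefficients:
\[
\alpha=\sum_{k=-M}^{-1}\omega^{k}-\sum_{k=0}^{M}\omega^{k},\qquad \beta=\sum_{k=-M}^{M}(b-|k|)\,\omega^{k}.
\]
Here $\beta=bD_M-\big(\text{the sum }2\sum k\cos\big)$ is real, and by \eqref{eq:Fn1cos} it equals $bD_M(\gamma)-(M+1)(D_M-F_{M+1})$. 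Also $\alpha=-1-2i\sum_{k=1}^{M}\sin(2\pi k\gamma)$. Thus $x'=-\beta/\alpha$ is real iff either $\beta=0$ and $\alpha\neq0$ (giving $x'=0$), or $\operatorname{Im}\alpha=0$. The shift between the $x$-dependence on the two branches needs care here, and I will verify the exact coefficients, so the precise formulas may carry extra terms involving $\omega^{M+1}$.

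The real condition I expect, after the honest computation, is a factorization of the form
\[
\operatorname{Im}(\beta\overline\alpha)\;\propto\;\sin(M\pi\gamma)\,\sin((M+1)\pi\gamma)\cdot(\text{nonvanishing factor}),
\]
which is built from $G_M$- and $F_{M+1}$-type quantities; compare \eqref{eq:GNpos} and Lemma~\ref{lem:noTwoZero}. This would give reality exactly when $\omega^{M}=1$ or $\omega^{M+1}=1$. The main obstacle is producing this factorization and showing that the remaining factor never vanishes on $(0,1)$. For that I would express everything via $D_M$, $F_{M+1}$ and $G_M$, and use their positivity and the fact (Lemma~\ref{lem:noTwoZero}) that no two vanish together.

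\textbf{Evaluating $x'$.} Once reality is established, I substitute each case into \eqref{eq:xzero}. If $\omega^{M}=1$, then $\omega^{-M}=1$ and $\omega^{M+1}=\omega$. The denominator becomes $\omega-1$, and the numerator becomes $(b-M)-(b-M-1)\omega$. So
\[
x'=-\frac{\omega}{1-\omega}+\frac{(b-M)-(b-M-1)\omega}{1-\omega}=b-M.
\]
If $\omega^{M+1}=1$, then $\omega^{-M}=\omega$. The denominator is $\omega-1$, and the same simplification gives $x'=1-(b-M)$. The final clause, that $x'$ is a zero only if $x'=x$, is immediate because $Z$ is affine in $x$ on the interval where $(\ell,n)$ is constant. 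Case (b) follows by the identical substitutions with the roles of $M$ and $M+1$ swapped, which exchanges the two values of $x'$.
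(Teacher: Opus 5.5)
\textbf{Verdict: genuine gap.} Your reduction is correct, but the proposal stops where the proposition's real content lies.

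The ``only if'' half requires that $x'$ be non-real for every $\gamma\in(0,1)$ other than the $M$th and $(M{+}1)$th roots of unity. You leave open the branch $\beta=0$, which would give an extra real solution $x'=0$ and contradict the statement. You also only conjecture the factorization of $\operatorname{Im}(\beta\overline{\alpha})$, and you defer both points as ``the main obstacle.'' In the paper this is the decisive step. After factoring out $(\omega-1)(\omega^M-1)(\omega^{M+1}-1)$, the paper shows the remaining factor $(b-M)D_M+G_M$ is strictly positive, using $G_M\ge 0$, $D_M+G_M=(M+1)F_{M+1}\ge 0$ and Lemma~\ref{lem:noTwoZero}. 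As written, your argument proves only the ``if'' direction and the values of $x'$.

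\textbf{How to close it with your own setup.} Your decomposition actually finishes the proof more transparently than the paper's polynomial computation.
\begin{itemize}
\item Your $\alpha$ and $\beta$ are right: there are no extra $\omega^{M+1}$ terms. Only the sign of $\operatorname{Im}\alpha$ is off, and the first display of $\alpha$ has a sign typo.
\item Precisely, $\alpha=-1+2iS$ with $S=\sum_{k=1}^M\sin(2\pi k\gamma)=\sin(M\pi\gamma)\sin((M+1)\pi\gamma)/\sin(\pi\gamma)$. Hence $\operatorname{Im}(\beta\overline{\alpha})=-2\beta S$, and on $(0,1)$ we have $S=0$ if and only if $\omega^M=1$ or $\omega^{M+1}=1$.
\item Your $\beta=bD_M-(M+1)(D_M-F_{M+1})$ simplifies, via \eqref{eq:GNpos}, to
$(b-M)D_M+G_M=(b-M)(M+1)F_{M+1}+(M+1-b)G_M$. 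This is exactly the paper's residual factor.
\item For $0<b-M<1$ this is a positive combination of two nonnegative kernels with disjoint zero sets, so it is strictly positive. If $b$ is an integer, $\beta=G_M$ vanishes only where $\omega^M=1$, which is harmless.
\end{itemize}

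\textbf{Case (b).} It is not literally the same computation with $M$ and $M{+}1$ interchanged. There $k$ runs over $-(M+1),\dots,M-1$, and the constant term is not real. Re-indexing $k\mapsto k+1$ (i.e.\ writing $x=1+y$) gives $x'=1-\beta/(1+2iS)$ with the same real $\beta$, so the same criterion follows. Alternatively, use the reflection $Z_{1/b}Q_2(-x,\gamma)=Z_{1/b}Q_2(x,-\gamma)$, as the paper does.
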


\begin{proof}
Since $M,M{+}1$ are consecutive integers, $\gcd(M,M{+}1)=1$ and therefore since $\omega\neq 1$ it follows that $\omega^\ell-2+\omega^n\neq 0$. So \eqref{eq:xzero} applies directly for all $\gamma\in(0,1)$.

Taking $(\ell,n)=(-M,M{+}1)$ and writing \eqref{eq:xzero} as $A(\omega)/B(\omega)$, $x$ is real iff $A(\omega)\overline{B(\omega)}\in\mathbb R$, which reduces to

    \begin{align}
    \begin{split}
   \operatorname{Im}(A(\omega)\overline{B(\omega)}) &=6\sin(2\pi\gamma) + 2(M{+}1{-}b)\sin(2\pi(M{-}1)\gamma) + 2(b{-}M{+}2)\sin(2\pi M\gamma) \\&+ 2(b{-}M{-}3)\sin(2\pi(M{+}1)\gamma)
    \quad+ 2(M{-}b)\sin(2\pi(M{+}2)\gamma)\\ &+ 2(b{-}M{-}1)\sin(4\pi M\gamma) + 2(2M{+}1{-}2b)\sin(2\pi(2M{+}1)\gamma)\\
    &+ 2(b{-}M)\sin(4\pi(M{+}1)\gamma) = 0.\end{split}\label{eq:boundarytrigpoly}
    \end{align}
Since $Q_2$ is even, $Z_{1/b}Q_2(-x,\gamma)=Z_{1/b}Q_2(x,-\gamma)$, so \eqref{eq:boundarytrigpoly} governs $(\ell,n)=(-(M{+}1),M)$ identically.

 Under the substitution \(\sin(2\pi f\gamma) = \frac{\omega^f - \omega^{-f}}{2i}\), clearing denominators and the constant \(2i\), we obtain a polynomial in \(\omega\) whose roots on the unit circle correspond to \(\gamma\) admitting a real solution for \(x\)
    \[
        P(\omega)=2(\omega-1)(\omega^M-1)(\omega^{M+1}-1)Q(\omega),
    \]
    where
    \[
        Q(\omega)=(b{-}M)(\omega^{2M+2}{+}1)+(M{+}1{-}b)\omega(\omega^{2M}{+}1)-2\omega^{M+1}.
    \]
    
The polynomial $Q$ has a double root at $\omega=1$; factoring it out results in \[Q(\omega)/(\omega-1)^2=(b-M)D_M(\gamma)+G_M(\gamma).\]

Since $M=\lfloor b\rfloor$, $b-M\in(0,1)$: If \(D_M(\gamma) \geq 0\), then \((b - M) D_M(\gamma) + G_M(\gamma) > 0\), since \(G_M(\gamma) \geq 0\) and \(D_M\), \(G_M\) share no roots by Lemma~\ref{lem:noTwoZero}; If instead we assume \(D_M(\gamma) < 0\), then, as \(D_N(\gamma) + G_N(\gamma) = (N+1) F_{N+1}(\gamma) \geq 0\), it follows that \(G_N(\gamma) \geq - D_N(\gamma)\), and we have
        \[
            (b - M) D_M(\gamma) + G_M(\gamma) \geq (b - M - 1) D_M(\gamma) > 0.
        \] Therefore the only roots of $P(\omega)$ lying on the unit-circle are $\omega=1$ and the $M$th/$(M{+}1)$th roots of unity.

At $\omega^M=1$ with $(\ell,n)=(-(M{+}1),M)$:
\[
    (b+\ell)\omega^\ell-(b-n)\omega^n \Big|_{\omega^M=1} = (b-M-1)\omega^{-1}-(b-M),
\]
and $\omega^\ell-2+\omega^n=\omega^{-(M+1)}-1=\omega^{-1}-1$ using $\omega^M=1$. Substituting into \eqref{eq:xzero}:
\begin{align*}
        x & = -\frac{\omega}{1-\omega} - \frac{(b-M-1)\omega^{-1}-(b-M)}{\omega^{-1}-1}\\ &= -\frac{\omega}{1-\omega} - \frac{(b-M-1)-(b-M)\omega}{1-\omega}\\ &= -\frac{\omega+(b-M-1)-(b-M)\omega}{1-\omega} \\&= 1-(b-M).
\end{align*}
By reflection, the $M$th-root case for $(\ell,n)=(-M,M{+}1)$ gives $x=b-M$. The $(M{+}1)$th-root cases follow symmetrically.
\end{proof}

\begin{corollary}\label{cor:zerosZak12}
Suppose $\epsilon=-\tfrac12$. For $\gamma\in(0,1)$, $(Z_{1/b}Q_2)(\tfrac12,\gamma)=0$ if and only if
\[
    \gamma\in\{k/(N-1) : k\not\equiv0\pmod{N-1}\}\cup\{k/N : k\not\equiv0\pmod{N}\}.
\]
\end{corollary}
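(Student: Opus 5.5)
Throughout, $\epsilon=-\tfrac12$. The plan is to reduce the corollary to Proposition~\ref{prop:boundary}(b), applied at the single point $x=\tfrac12$. The first step is to record the parameters. Since $R(b)=N$ and $F(b)=b-N=-\tfrac12$, we have $b=N-\tfrac12$. Hence $M=\lfloor b\rfloor=N-1$ and $b-M=\tfrac12$. We are in Case III of Figure~\ref{fig:eps-regions}, so $[0,1)=I_\epsilon^-\cup I_\epsilon^+$ with $I_\epsilon^+=[\tfrac12,1)$. Thus $x=\tfrac12\in I_\epsilon^+$, where $(\ell,n)=(-(M+1),M)=(-N,N-1)$. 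This can also be checked directly from $\ell(x)=\lceil -(x+b)\rceil=\lceil -N\rceil=-N$ and $n(x)=\lceil b-x\rceil=N-1$.

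The second step is to justify that, for fixed $\gamma\in(0,1)$ and constant $(\ell,n)$, the point $x$ is a zero exactly when $x=x'$ from \eqref{eq:xzero}. By Lemma~\ref{lem:zakexpr}, $(Z_{1/b}Q_2)(x,\gamma)$ is affine in $x$ on $I_\epsilon^+$, with $x$-coefficient
\[
b^{-3/2}\,\frac{\omega^{\ell}-2+\omega^{n}}{1-\omega}.
\]
Since $\gcd(N,N-1)=1$ and $\omega\neq1$, the triangle-inequality fact stated before Lemma~\ref{lem:zerosZak1} shows this coefficient is nonzero. So the affine function has a unique root $x'$, and $(Z_{1/b}Q_2)(\tfrac12,\gamma)=0$ if and only if $x'=\tfrac12$. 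In particular, a zero forces $x'$ to be real.

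The third step applies Proposition~\ref{prop:boundary}(b). The solution $x'$ is real if and only if $\omega^{M}=1$ or $\omega^{M+1}=1$, that is, $\omega^{N-1}=1$ or $\omega^{N}=1$. In these cases $x'=1-(b-M)=\tfrac12$ or $x'=b-M=\tfrac12$, respectively. Because $b-M=\tfrac12$, both branches give $x'=\tfrac12$, so every admissible root of unity yields a genuine zero at $x=\tfrac12$. Conversely, any zero at $x=\tfrac12$ requires a real $x'$, hence $\omega^{N-1}=1$ or $\omega^N=1$.

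The last step translates back to $\gamma$. With $\omega=e^{-2\pi i\gamma}$ and $\gamma\in(0,1)$, the condition $\omega^{N-1}=1$ means $\gamma=k/(N-1)$, and $\omega^N=1$ means $\gamma=k/N$. The restriction $\gamma\neq0$ is exactly $k\not\equiv0$ modulo the respective denominator.

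The only delicate point is the nonvanishing of the slope, which legitimizes using \eqref{eq:xzero} as an equivalence rather than merely a necessary condition. This is supplied by the coprimality of $N$ and $N-1$. If $N-1=1$ (i.e. $b=\tfrac32$), the first set in the statement is empty and the argument is unchanged.
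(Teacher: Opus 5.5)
Your proposal is correct and follows the paper's route: the paper's proof is the one-line observation that $b-M=\tfrac12$ collapses both cases of Proposition~\ref{prop:boundary} to $x'=\tfrac12$. You additionally make explicit two points the paper leaves implicit: that $x=\tfrac12$ lies in the regime $(\ell,n)=(-N,N-1)$, and that the nonvanishing slope turns \eqref{eq:xzero} into an equivalence.
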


\begin{proof}
Since $|\epsilon|=\tfrac12$, $b-M=\tfrac12$, so $\{b-M,1-(b-M)\}=\{\tfrac12\}$. The claim follows directly from Proposition~\ref{prop:boundary}.
\end{proof}

\subsection{Characterization of all roots of $Z_{1/b}Q_2$ in $[0,1)^2$} \label{sec:charall}

\begin{proof}[Proof of Theorem \ref{thm:zerosZak}]
\begin{enumerate}
\item First, we claim that there are no real roots of $Z_{1/b}Q_2(\cdot, \gamma)$ for $\gamma\in\{0,1\}$. By Definition 1, $(Z_{1/b}Q_2)(x,0)=\sqrt{1/b}\sum_k Q_2(\tfrac1b(x-k))$, a sum of nonnegative terms. Since $Q_2$ is supported in $[-1,1]$ and $b>\tfrac12$, consecutive shifted windows overlap and collectively cover $\mathbb R$, so at least one term is strictly positive for every $x$; hence $(Z_{1/b}Q_2)(x,0)>0$ for all $x$. By periodicity, the statement holds for $\gamma=1$.


\item If $|\epsilon|=\tfrac12$, then $[|\epsilon|,1-|\epsilon|]=\{\tfrac12\}$. 
Combining this with Corollary~\ref{cor:zerosZak12}, we have that $(Z_{1/b}Q_2)(x, \gamma)=0$ if and only if $(x, \gamma)$ belongs to the two families of roots $\{x=1/2, \gamma=k/N, k\neq 0 \mod N\}$ and $\{ x=1/2, \gamma=k/(N-1), k\neq 0 \mod N -1\}$. 

The first family of roots satisfies \eqref{eq:zakrootsieps}, and we show that \eqref{eq:coteqn} holds in this case as well, i.e., \eqref{eq:coteqn} algebraically reduces to exactly this family, since $\cot(\pi\gamma)-\cot(N\pi\gamma)=0\iff\gamma=k/(N{-}1)$ for $k\neq 0 \mod N -1$.
To prove this, we have
\begin{align*}
\cot(\pi\gamma) - \cot(N\pi\gamma) 
&= \frac{\cos(\pi\gamma)}{\sin(\pi\gamma)} - \frac{\cos(N\pi\gamma)}{\sin(N\pi\gamma)} \\
&= \frac{\cos(\pi\gamma)\sin(N\pi\gamma) - \cos(N\pi\gamma)\sin(\pi\gamma)}{\sin(\pi\gamma)\sin(N\pi\gamma)} \\
&= \frac{\sin(N\pi\gamma - \pi\gamma)}{\sin(\pi\gamma)\sin(N\pi\gamma)} = \frac{\sin((N-1)\pi\gamma)}{\sin(\pi\gamma)\sin(N\pi\gamma)},
\end{align*}
using $\sin(A-B)=\sin A\cos B-\cos A\sin B$ with $A=N\pi\gamma$, $B=\pi\gamma$. For $\gamma\in(0,1)\setminus\{k/N:k\in\mathbb Z\}$ (so that $\sin(\pi\gamma),\sin(N\pi\gamma)\ne0$ and the expression is defined), this vanishes if and only if
\[
\sin((N-1)\pi\gamma)=0 \iff (N-1)\pi\gamma = m\pi \text{ for some } m\in\mathbb Z \iff \gamma=\frac{m}{N-1}.
\]
It remains to check $\gamma=m/(N-1)$ avoids the excluded poles. Since $\gcd(N-1,N)=1$, if $m/(N-1)=j/N$ for integers $m,j$, then $mN=j(N-1)$, so $(N-1)\mid mN$, and since $\gcd(N-1,N)=1$, $(N-1)\mid m$; combined with $m\not\equiv0\pmod{N-1}$ (needed for $\gamma\ne0,1$), this is impossible. So every $\gamma=k/(N-1)$ with $k\not\equiv0\pmod{N-1}$ avoids both poles automatically, and
\[
\cot(\pi\gamma)-\cot(N\pi\gamma)=0 \iff \gamma=\frac{k}{N-1},\ k\not\equiv0\pmod{N-1}.
\]

This proves Theorem \ref{thm:zerosZak} for the case $\epsilon=-1/2$.

\item Next, for $0\leq|\epsilon|< \tfrac12$, we eliminate roots outside of $x\in[|\epsilon|,1-|\epsilon|]$.

Suppose $x \not\in I_{\epsilon}$. Then, $\gcd(|\ell|,n)=1$, so there are no degenerate roots since $\omega^\ell-2+\omega^n\ne0$, and therefore \eqref{eq:xzero} applies. 

On $I_\epsilon^{+}$ $(\ell, n)=(-(M+1), M)$,
\begin{itemize}
\item $\epsilon>0\implies M=N$
\begin{align*}
    x &= -\frac{\omega}{1-\omega} - \frac{(b-(N+1))\omega^{-(N+1)}-(b-(N))\omega^{N}}{\omega^{-(N+1)}-2+\omega^{N}}\\
&= -\frac{\omega}{1-\omega} - \frac{(b-N-1)\omega^{-1}-(b-N)}{\omega^{-1}-1}\\
&= -\frac{\omega}{1-\omega} - \frac{(b-N-1)(1-\omega) - \omega}{1-\omega}\\
&= 1-\epsilon.
\end{align*}

\item $\epsilon<0\implies M=N-1$
\begin{align*}
    x &= -\frac{\omega}{1-\omega} - \frac{(b-(N))\omega^{-(N)}-(b-(N-1))\omega^{M}}{\omega^{-(N)}-2+\omega^{N-1}}\\
     &= -\frac{\omega}{1-\omega} + \frac{(b-N+1)(\omega^{-1}-1) +1}{-1+\omega^{-1}}\\
     &= 1+\epsilon.
\end{align*}
\end{itemize}
On $I_\epsilon^{-}$, $(\ell, n) = (-(M), M+1)$ and we see that in both cases the root is $x=\epsilon \not\in I_\epsilon^{-}$: 
\begin{itemize}
    \item$\epsilon>0\implies M=N$:
\begin{align*}
    x &= -\frac{\omega}{1-\omega} - \frac{(b-N)-(b-(N+1))\omega}{-1+\omega}\\
    &= -\frac{\omega}{1-\omega} - \frac{(b-(N+1))-(b-(N+1))\omega +1}{-1+\omega}\\
    &= -\frac{\omega}{1-\omega} - \frac{(b-(N+1))(1-\omega) +1}{-1+\omega}\\
     &=\epsilon.
\end{align*}
\item $\epsilon<0\implies M=N-1$:
\begin{align*}
    x &= -\frac{\omega}{1-\omega} - \frac{(b-(N-1))\omega^{-(N-1)}-(b-(N))\omega^{N}}{\omega^{-(N-1)}-2+\omega^{N}}\\
    &= -\frac{\omega}{1-\omega} - \frac{(b-N+1)\omega-(b-N+1) + 1}{\omega-1}\\
     &= -\frac{\omega}{1-\omega} - \frac{(b-N+1)(\omega-1) + 1}{\omega-1}\\
     &= -\epsilon.
\end{align*}
\end{itemize}

This proves that the only roots that lie outside of $I_\epsilon\times(0,1)$ have the form $x=1-|\epsilon|$, $\gamma=k/N$ for $k\not\equiv0\pmod N$: evaluating at $\gamma=k/N$ gives a genuine root only on $I_\epsilon^+$ (where it lands at $x=1-|\epsilon|$), while the same $\gamma$ evaluated on $I_\epsilon^-$ gives $x=\epsilon\notin I_\epsilon^-$, so no root occurs there.

\item Now we characterize roots $I_\epsilon\times (0,1)$ for $0\leq |\epsilon|<\tfrac12$ and show that there are exactly two families of roots: degenerate roots satisfying \eqref{eq:zakrootsieps} and nondegenerate roots satisfying \eqref{eq:coteqn}.

By the triangle inequality, the only degenerate roots can occur for $\gamma=k/N, k\not\equiv0\pmod N$. These roots, given by Lemma~\ref{lem:zerosZak1} are exactly the set $\{x\in I_\epsilon, \gamma=\frac{k}{N}, k\neq 0 \mod N\}$.

For nondegenerate roots $(x,\gamma)$, with $\omega^{-N}-2+\omega^N\ne0$, \eqref{eq:xzero} applies on $I_\epsilon$ with $(\ell,n)=(-N,N)$:
\[
    x = -\frac{\omega}{1-\omega} - \frac{(b-N)(\omega^{-N}-\omega^N)}{\omega^{-N}-2+\omega^N}.
\]
Write $\omega=e^{-i\theta}$, $\theta=2\pi\gamma$. Using $\dfrac{\omega}{1-\omega}=\dfrac{e^{-i\theta/2}}{2i\sin(\theta/2)}=-\dfrac12-\dfrac i2\cot(\theta/2)$,
\[
    -\frac{\omega}{1-\omega} = \frac12 + \frac i2\cot(\pi\gamma).
\]
For the second term, with $\phi=N\theta$: $\omega^{-N}-\omega^N=2i\sin\phi$ and $\omega^{-N}-2+\omega^N=-4\sin^2(\phi/2)$, so
\[
    -\frac{(b-N)(\omega^{-N}-\omega^N)}{\omega^{-N}-2+\omega^N} = (b-N)\,\frac{i\sin\phi}{2\sin^2(\phi/2)} = i(b-N)\cot(N\pi\gamma).
\]
Combining,
\[
    x = \frac12 + i\left[\frac12\cot(\pi\gamma)+(b-N)\cot(N\pi\gamma)\right].
\]

So $x$ is real if and only if the bracket vanishes, i.e.\ exactly \eqref{eq:coteqn}, in which case $x=\tfrac12$, which always lies in $[|\epsilon|,1-|\epsilon|]$.
\end{enumerate}

Therefore, we have shown that all roots $(x,\gamma)$ of $Z_{1/b}Q_2$ satisfy $x\in [|\epsilon|, 1-|\epsilon|]$ and satisfy either \eqref{eq:zakrootsieps} or \eqref{eq:coteqn}.

\end{proof}



    

\subsection{Alternate proof of Theorem \ref{thm:lemneil} for $n=2$}\label{sec:alternateproof}
\begin{proof}
       Fixing \(r = 0\), we obtain the row vector $\phi = (\phi_0, \hdots, \phi_{q-1})$, $\phi_s = p^{-1/2}Z_{1/b} Q_2(x - s\tfrac pq, \gamma)$, of the Zibulski-Zeevi matrix. Then, for $(x,\gamma)\in [0,1)^2$,
   \begin{align}
        \norm{\phi(x,\gamma)}^2 = \frac1p\sum_{s=0}^{q-1} \abs{Z_{\frac{1}{b}} Q_2\!\left(x - s\tfrac pq,\ \gamma\right)}^2.  \label{eq:phinorm}
   \end{align}
    If \eqref{eq:phinorm} vanishes at some $(x_0,\gamma_0)$, then $\Phi_{0,s}=0$ for every $s$ there, so $\Phi(x_0,\gamma_0)$ has an identically zero row and $\{\phi_s^{Q_2}(x_0,\gamma_0)\}$ cannot span $\mathbb C^p$. Since $\Phi$ is continuous, this forces $\lambda_{\min}(\Phi\Phi^*)$ to be arbitrarily small on every neighborhood of $(x_0,\gamma_0)$, hence its essential infimum is zero, so by Theorem~\ref{thm:ZZchar}, $\mathcal G(Q_2,a,b)$ is not a frame.

    Since $|Z_{1/b}Q_2|^2$ is $1$-periodic in $x$, each term of \eqref{eq:phinorm} can be evaluated at $x-s\tfrac pq \bmod 1 \in[0,1)$. 
    Since $\gcd(p,q)=1$, the map $s\mapsto sp\bmod q$ is a bijection on $\{0,\ldots,q-1\}$, so $\{s\tfrac pq\bmod1 : s=0,\ldots,q-1\}=\{0,\tfrac1q,\ldots,\tfrac{q-1}q\}$ as sets, independent of $p$. For $b=N+\epsilon$ with $|\epsilon|\leq \tfrac12$, suppose $|\epsilon|\le\tfrac{1}{2q}$; we show every term of \eqref{eq:phinorm} vanishes at $\gamma_0=k/N$ ($k\not\equiv0\pmod N$) and $x_0=1-|\epsilon|$. For each $s=0,\ldots,q-1$,
\[
    x_0 - \tfrac sq = 1-|\epsilon|-\tfrac sq \;\ge\; 1-\tfrac1{2q}-\tfrac{q-1}q \;=\;\tfrac{1}{2q}\;>\;0.
\]
This value lies in $[|\epsilon|,1-|\epsilon|]$ iff $|\epsilon|\le\tfrac{q-s}{2q}$; since $|\epsilon|\le\tfrac1{2q}\le\tfrac{q-s}{2q}$ for every $s\le q-1$, this holds for every $s$. So every term of \eqref{eq:phinorm} vanishes at $(x_0,\gamma_0)$, by  Theorem~\ref{thm:zerosZak}.

\end{proof}

\section{Sharpness of Lemvig--Nielsen obstructions}\label{sec:main}

\begin{proof}[Proof of Theorem \ref{thm:main}]

    Suppose $|\epsilon|=|F(b)|>\frac{1}{2q}$. Fix $(x, \gamma)\in [0, 1)\times (0,1)$.

    There are $q$ terms in the sum \eqref{eq:phinorm} that evaluate the same function $Z_{\frac{1}{b}}Q_2(\cdot, \gamma)$ at the $q$ distinct, equally spaced points
    \[x- \frac{s}{q}\mod 1, \qquad s=0, \hdots, q-1.\]
These points correspond to points on the circle of circumference $1$.

Fix $\gamma\in(0,1)$. Theorem \ref{thm:zerosZak} implies that $Z_{\frac{1}{b}}Q_2(\cdot, \gamma)$ has zeros in $[0,1)$ if either 
\begin{enumerate}[(a)]
    \item $\gamma=k/N$ for $k\neq 0\mod N$, or
    \item $\gamma$ satisfies \eqref{eq:coteqn}.
\end{enumerate}
Suppose (a) is true. Then, since $Z_{\frac{1}{b}}Q_2(\cdot, k/N)$ vanishes on $\overline{I_\epsilon}$, it follows that the sum \eqref{eq:phinorm} is zero if and only if $x-s\tfrac{1}{q} \in \overline{I_\epsilon}$ for all $s=0,\hdots q-1$. This means that $x\in \cap_{s=0}^{q-1} (\overline{I_\epsilon}+ s\tfrac{1}{q})$. Since $\overline{I_\epsilon}$ is an interval of length $(1-2|\epsilon| )$ it follows that $[0,1)\backslash I_\epsilon$ has length $2|\epsilon| $. 

The translates of the complement, $(I_\epsilon^c+s\tfrac1q)$ where $I_\epsilon^c=[0,1)\setminus\overline{I_\epsilon}$, have total length $q\cdot2|\epsilon|$ and cover the unit circle when $2|\epsilon|\ge\tfrac1q$, i.e.\ $|\epsilon|\ge\tfrac1{2q}$. Since $|\epsilon|>\tfrac1{2q}$ by hypothesis, the complements' translates cover the circle, so by De Morgan $\bigcap_{s=0}^{q-1}(\overline{I_\epsilon}+s\tfrac1q)=\emptyset$. Therefore there is no $x$ that makes all of the terms in \eqref{eq:phinorm} zero.

In case $(b)$, suppose that $\gamma=\gamma_0$ is a solution of \eqref{eq:coteqn}. Then, by Theorem \ref{thm:zerosZak}, it follows that $x=\frac{1}{2}$. In order for all of the terms in \eqref{eq:phinorm} to vanish, we need $x-\tfrac{s}{q}\equiv \frac{1}{2}\mod 1$ for all $s=0,\hdots, q-1.$ Since $x-\tfrac{s}{q}$ takes $q$ distinct values, with $q\geq 2$ this means there is a nonzero term in \eqref{eq:phinorm}. 

Therefore, \eqref{eq:phinorm} is positive for all $(x,\gamma)\in \mathbb{R}/\mathbb{Z}\times (0,1)$. Since $Q_2$ is continuous and compactly supported, $Z_{\frac{1}{b}}Q_2$ is continuous in $(x,\gamma)$, and $\|\phi(x,\gamma)\|^2$ is continuous on the compact set $[0,1]\times[0,1]$. Since a continuous, everywhere positive function on a compact set attains a positive minimum, it follows that $\inf_{(x,\gamma)}\|\phi(x,\gamma)\|^2>0$. Therefore, by Theorem \ref{thm:ZZchar}, $\mathcal{G}(Q_2,a,b)$ is a frame.

Therefore, all other points along the curves $ab=\frac{1}{q}$ which do not belong to the
    Lemvig--Nielsen obstructions are in fact in $\mathcal{F}(Q_2, a,b)$.
\end{proof}

\section{Acknowledgements} This work was sponsored by NSF DMS-2309651. {\it Declaration of Generative AI and AI-assisted technologies in the writing process:}
During the preparation of this work, the authors used AI (including Claude and DeepSeek) to assist with numerical and symbolic verification of mathematical claims, and to assist with exposition, LaTeX formatting, and figure preparation. All mathematical content, including every claim identified or verified with the tool's assistance, was independently reviewed and confirmed by the authors, who take full responsibility for the correctness and content of the published work.

\nocite{*}

\printbibliography

\end{document}